\newtheorem{thm}{Theorem}
\newtheorem{lem}[thm]{Lemma}
\newtheorem{prop}[thm]{Proposition}
\theoremstyle{definition}
\newtheorem{definition}{Definition}
\newtheorem{example}{Example}
\newtheorem*{remark}{Remark}
\numberwithin{equation}{section}
\newcounter{commentcounter}
\newcommand{\R}{\mathbb R}
\newcommand{\N}{\mathbb N}
\newcommand{\Z}{\mathbb Z}
\newcommand{\Conf}{\operatorname{Conf}}
\newcommand{\TC}{\operatorname{TC}}
\newcommand{\GC}{\operatorname{GC}}
\newcommand{\Y}{\operatorname{Y}}
\newcommand{\C}{\operatorname{C}}
\newcommand{\F}{\operatorname{F}}
\newcommand{\cl}{\mathscr{C}}
\newcommand{\xopp}{X_2^{opp}}
\newcommand{\tz}{tikzpicture}
\newcommand{\ssize}{\scriptstyle}
\newcommand{\eb}{\overline{e}}
\newcommand{\bd}{{\mathbf d}}
\newcommand*{\blackcircle}{ 
\raisebox{-.32ex}
{\scalebox{1.96}{$\bullet$}}}
\begin{document}

\title{Two robots moving geodesically on a tree}

\author{Donald M. Davis}
\address{D.M. Davis \\ Department of Mathematics, Lehigh University\\Bethlehem, PA 18015, USA}
\email{dmd1@lehigh.edu}
\author{Michael Harrison}
\address{M. Harrison \\ Dept.\ Math.\ Sciences, Carnegie Mellon University, Pittsburgh, PA 15213, USA}
\email{mah5044@gmail.com} 
\author{David Recio-Mitter}
\address{D. Recio-Mitter \\ Department of Mathematics, Lehigh University\\Bethlehem, PA 18015, USA}
\email{d.reciomitter@yahoo.com}
\date{\today}

\keywords{geodesic, configuration space, topological robotics, graphs}
\thanks {2020 {\it Mathematics Subject Classification}: 53C22, 55R80, 55M30, 68T40.}

\begin{abstract} We study the geodesic complexity of the ordered and unordered configuration spaces of graphs in both the $\ell_1$ and $\ell_2$ metrics.  We determine the geodesic complexity of the ordered two-point $\varepsilon$-configuration space of any star graph in both the $\ell_1$ and $\ell_2$ metrics and of the unordered two-point configuration space of any tree in the $\ell_1$ metric, by finding explicit geodesics from any pair to any other pair, and arranging them into a minimal number of continuously-varying families.  In each case the geodesic complexity matches the known value of the topological complexity.
\end{abstract}

\maketitle

\section{Introduction and statement of results}

The topological complexity of a space $X$, introduced almost two decades ago by Farber \cite{Farber}, is a homotopy invariant of $X$ which measures the complexity of the \emph{motion planning problem} on $X$.  For example, if $X$ represents the space of all possible states of a robot arm, then $\TC(X)$ measures the number of rules required to determine a complete algorithm which dictates how the robot arm will move from any given initial state to any given final state.  The formal definition requires the notion of the \emph{free path fibration} $PX \to X \times X$, which sends a path $\gamma : [0,1] \to X$ to the pair $(\gamma(0),\gamma(1))$.  The \emph{(reduced) topological complexity} of $X$ is then defined as the smallest number $k$ for which there exists a decomposition
\[ X \times X = \bigsqcup_{i=0}^k E_i\]
into Euclidean Neighborhood Retracts (ENRs) $E_i$, such that there exist local sections $s_i : E_i \to PX$ of the free path fibration.  The sections $s_i$ are the ``rules'' which specify, for any two points $(a,b) \in E_i \subset X \times X$, a path from $a$ to $b$.  The fact that $s_i$ is a section ensures that the rules vary continuously with $(a,b) \in E_i$; here $PX$ is considered with the compact-open topology.

\emph{Geodesic complexity}, recently introduced by the third author \cite{RM}, is a geometric counterpart to topological complexity defined for metric spaces $(X, g)$.  The geodesic complexity also measures the complexity of the motion planning problem, except that all motions are required to follow minimizing geodesics in $X$.

\begin{definition} Let $(X, g)$ be a metric space.  Let $\gamma : [0,1] \to X$ be a path in $X$ and let $\ell(\gamma)$ denote its length.  We say that $\gamma$ is a \emph{(minimal) geodesic} if $\ell(\gamma) = g(\gamma(0),\gamma(1))$.
\end{definition}

\begin{remark} We frequently drop the word ``minimal,'' but we follow the convention that ``geodesic'' always refers to a minimizing geodesic as defined above.  See \cite{RM} for alternate equivalent definitions and some discussion on terminology in metric vs.\ riemannian geometry.
\end{remark}

The formal definition of geodesic complexity is analogous to that of topological complexity.  Let $GX$ be the subspace of $PX$ consisting of geodesics.  Restricting the free path fibration $PX \to X \times X$ to $GX$ yields a map $\pi : GX \to X \times X$.  We note that the map $\pi$ is no longer a fibration (except in the very special case when it is a homeomorphism), although it is sometimes a branched covering (see \cite{RM}, Section 3).

\begin{definition} The \emph{geodesic complexity} $\GC(X, g)$ of a metric space $(X, g)$ is defined as the
smallest number $k$ for which there exists a decomposition
\[ X \times X = \bigsqcup_{i=0}^k E_i\]
into ENRs $E_i$, such that there exist local sections $s_i : E_i \to GX$ of $\pi$.  We refer to the collection $\left\{ s_i \right\}$ as a \emph{geodesic motion planner} and each $s_i$ as a \emph{geodesic motion planning rule} (GMPR).
\end{definition}

By definition, $\TC(X) \leq \GC(X, g)$ for any metric $g$.  In particular, the topological complexity of a space is a homotopy invariant, hence independent of the metric on $X$, but the geodesic complexity of a space genuinely depends on the metric.  The third author showed in \cite{RM} that on each sphere $S^n$, $n \geq 3$, there exist two metrics with different geodesic complexity.  Specifically, for every $k \in \N$, there exists a metric $g$ on $S^{k+2}$ with $\GC(S^{k+2}, g) - \TC(S^{k+2}) \geq k-1$, so the gap between $\GC$ and $\TC$ may be arbitrarily large.

Besides trivial examples, the geodesic complexity has been computed for only a handful of spaces.  In \cite{RM}, the third author computes the geodesic complexity of the flat $n$-torus and the flat Klein bottle and gives lower bounds for the $\GC$ of the standard $2$-torus and for the boundary of the $3$-cube in $\R^3$ which are larger than the $\TC$ of the respective spaces.  In \cite{DR}, the first and third authors compute the $\GC$ of the $n$-dimensional Klein bottles (see also \cite{Davis}), for which the topological complexity is still unknown except in the case of $n=2$ (the ordinary Klein bottle).  In \cite{D}, the first author computes the $\GC$ of certain configuration spaces of $\R^n$.

Our present goal is to compute the geodesic complexity of configuration spaces of certain graphs.  Configuration spaces of graphs are of central importance in topological robotics, since they model the situation of several robots moving along tracks, as in a warehouse (\cite{G}).  We consider the case of two points, either distinguished or indistinguishable, moving on a graph without colliding.  We always assume that the graph $G$ is a tree, and unless otherwise stated, we assume that $G$ is not homeomorphic to an interval.  We obtain explicit descriptions of the geodesics and optimal GMPRs on the configuration spaces of these graphs.

As a motivational example, let $G$ be the figure-$\Y$ graph (three edges emanating from a single vertex) with its usual path metric $d$, and consider the space $F_\varepsilon$ consisting of pairs of points of $\Y$ which are at least distance $\varepsilon$ apart.  A path in $X$ from $a = (a_1,a_2)$ to $b = (b_1,b_2)$ may be thought of as the motion of two particles $\blackcircle$ (the \emph{first} particle) and $\blacksquare$ (the \emph{second} particle) in $\Y$, beginning at $a_1$ and $a_2$, ending at $b_1$ and $b_2$, and staying at least $\varepsilon$ apart throughout their trajectories (see Figure \ref{fig:expath}).  Here $\varepsilon$ is assumed to be small relative to the lengths of the arms.

\begin{figure}[ht!]
\centerline{
\includegraphics[width=1.7in]{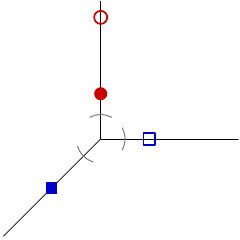} \hspace{.8in} \includegraphics[width=1.7in]{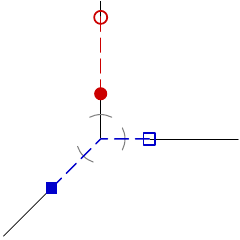}
}
\caption{Left: The $\Y$-graph with an $\varepsilon$-neighborhood of the vertex.  The solid circle and square represent the initial points $a_1$ and $a_2$; the empty circle and square represent the destination points $b_1$ and $b_2$.   Right: A path in $F_\varepsilon$ from $a$ to $b$.}
\label{fig:expath}
\end{figure}

More formally, we define the \emph{ordered two-point $\varepsilon$-configuration space} of a graph $G$ with path metric $d$:
\[
\F_{\varepsilon} \coloneqq F_\varepsilon(G, 2) = \Conf_\varepsilon(G,2) = \left\{ (a_1,a_2) \in G \times G \ \big| \ d(a_1,a_2) \geq \varepsilon \right\},
\]
and the \emph{unordered two-point configuration space} of $G$:
\[
\C \coloneqq C(G,2) = F(G,2) \slash \Z_2 = \left\{ (a_1,a_2) \in G \times G \ \big| \ a_1 \neq a_2 \right\} \slash [(a_1,a_2) \sim (a_2,a_1)].
\]
The lack of $\varepsilon$ in the unordered case will be explained shortly.

The product space $G \times G$ may be endowed with various natural metrics, any of which is inherited by $\F_\varepsilon$ and $\C$.  We focus our attention on the $\ell_1$ and $\ell_2$ metrics.  Writing $a = (a_1,a_2)$ and $b = (b_1,b_2)$ for points $a,b \in G \times G$, the $\ell_1$ and $\ell_2$ metrics on $G \times G$ are defined:
\begin{align*}
\ell_1(a,b) & = |d(a_1,b_1) + d(a_2,b_2)|, \\
\ell_2(a,b) & = \sqrt{d(a_1,b_1)^2 + d(a_2,b_2)^2}.
\end{align*}
In Figure \ref{fig:expath}, it is easy to see that there is a unique geodesic from $a$ to $b$ in the $\ell_2$ metric, obtained when the particles traverse their indicated paths at the appropriate relative speed.

\begin{remark} If $G$ is a tree not homeomorphic to an interval, the ordered two-point configuration space $F(G,2)$ is not geodesically complete in the $\ell_1$ nor $\ell_2$ metric.  To see this, let $v$ be a vertex of degree $\ge3$ with $a_2 = b_2 = v$ and such that $a_1$ and $b_1$ lie on different edges adjacent to $v$, both at equal distance $d$ from $v$. By moving the second point slightly onto a third edge, we can obtain a path in $F(G,2)$ from $(a_1,b_1)$ to $(a_2,b_2)$ of length arbitrarily close to $2d$, but there is no path of length $2d$ in either metric.  Therefore we replace $F(G,2)$ by the geodesically complete space $F_\varepsilon$, which is a ($\Z_2$-equivariant) deformation retract of $F(G,2)$ and hence topologically equivalent to it (see Lemma \ref{lem:defret}).
\end{remark}

\begin{remark} If $G$ is a tree with a vertex $v$ of degree $\ge4$, the unordered two-point configuration space $\C$ is not geodesically complete in the $\ell_2$ metric.  To see this, suppose that $a_1$, $a_2$, $b_1$, and $b_2$ lie on different edges adjacent to $v$, all equidistant from $v$ at distance $\frac{\sqrt{2}}{2}d$.  As above, there is a sequence of paths in $\C$ with length approaching $2d$, but there is no path of length $2d$.  On the other hand, $\C$ is geodesically complete in the $\ell_1$ metric for any tree $G$.  The key difference is that the speed of travel is irrelevant in the $\ell_1$ metric; only the total distance traveled by the particles is important.  Thus there is no penalty for choosing a motion which first keeps one particle fixed while the other moves to its destination, and then moves the remaining particle to its destination.  This strategy eliminates any issues caused by collisions which could potentially occur when both particles move simultaneously.
\end{remark}

Our main results give the values of $\GC(\F_\varepsilon,\ell_i)$, $i \in \left\{1,2\right\}$, and of $\GC(\C,\ell_1)$ for certain graphs $G$.

\begin{thm}
\label{thm:gcl2}
Let $G$ be a star graph, with $k \geq 3$ edges emanating from a single vertex, and let $\F_\varepsilon$ be the ordered two-point $\varepsilon$-configuration space of $G$.  Then
\begin{enumerate}
\item If $k = 3$, $\GC(\F_\varepsilon,\ell_i) = \TC(\F_\varepsilon) = 1$, for $i \in \left\{ 1, 2 \right\}$.
\item If $k \geq 4$, $\GC(\F_\varepsilon,\ell_i) = \TC(\F_\varepsilon) = 2$, for $i \in \left\{ 1, 2 \right\}$.
\end{enumerate}
\end{thm}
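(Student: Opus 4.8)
Since $\TC(X)\le\GC(X,g)$ for every metric $g$, the lower bounds are immediate once one knows $\TC(\F_\varepsilon)$: as $\F_\varepsilon\simeq F(G,2)$ (Lemma~\ref{lem:defret}), and $F(G,2)$ for the $k$-pronged star $G$ is homotopy equivalent to a wedge of $\binom{k-1}{2}$ circles, one has $\TC(\F_\varepsilon)=1$ for $k=3$ and $\TC(\F_\varepsilon)=2$ for $k\ge4$. The content of the theorem is therefore the construction, in each of $\ell_1$ and $\ell_2$, of a geodesic motion planner with exactly two GMPRs when $k=3$ and three GMPRs when $k\ge4$. The plan is to carry this out in two stages: first describe all geodesics of $\F_\varepsilon$, then assemble a canonical choice of geodesic into the required families.

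For the description, write $c$ for the centre of $G$ and $\rho(x)=d(x,c)$. The key observation is that a path of $G\times G$ whose coordinates each run monotonically along their (unique, since $G$ is a tree) $G$-geodesics leaves $\F_\varepsilon$ only while \emph{both} particles lie within $\varepsilon$ of $c$, so all of the difficulty is localized near $(c,c)$. In the $\ell_1$ metric the length of any path is $\ell(\gamma_1)+\ell(\gamma_2)$, so an $\ell_1$-geodesic is any $\ell_1$-length-minimizing monotone path staying $\varepsilon$-separated; whether one exists depends only on the combinatorial type of $(a,b)$ --- which arms contain $a_1,a_2,b_1,b_2$ and how the two $G$-geodesics meet $c$ --- together with a few inequalities among the $\rho$-values, and when none exists the geodesic is forced to have one particle make a minimal out-and-back ``escort'' excursion to distance $\varepsilon$ while the other crosses $c$. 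In the $\ell_2$ metric, $G\times G$ is CAT(0) (a product of trees) and $\{d(a_1,a_2)<\varepsilon\}$ is the open $\varepsilon/\sqrt2$-neighbourhood of the convex diagonal, hence convex; thus $\F_\varepsilon$ is the complement of a convex set, and its geodesics are the taut strings around that set: the straight $G\times G$-geodesic when it avoids the obstacle, and otherwise two such segments joined by an arc hugging the wall $d(a_1,a_2)=\varepsilon$, which I would make explicit by unfolding in the flat $(\rho_1,\rho_2)$-coordinates near $c$, where the wall is the line $\rho_1+\rho_2=\varepsilon$. Either way there are finitely many combinatorial cases to resolve.

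For the assembly, the first GMPR is the generic rule --- the synchronized motion, equivalently the unique taut string --- on the open dense ENR where it is defined and continuous, and everything else goes into one further piece when $k=3$ and two when $k\ge4$. The dichotomy enters precisely here. When $k\ge4$ there exist pairs $(a,b)$ with $a_1,a_2,b_1,b_2$ on four distinct arms all within $\varepsilon$ of $c$, for which both particles must cross $c$; the geodesic must escort both particles out to radius $\varepsilon$, and there are two of them --- escort particle~$1$ across first, or particle~$2$ first --- whose choice jumps across the locus where they have equal length (in $\ell_1$ the two lengths exceed the naive value by $4\varepsilon-2\rho(b_1)-2\rho(a_2)$ and $4\varepsilon-2\rho(a_1)-2\rho(b_2)$, with an analogous statement in $\ell_2$), so a single second piece cannot absorb the bad set and a third GMPR is used (consistently with $\TC=2$). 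When $k=3$ no four-arm configuration exists, so every ``both particles cross $c$'' case involves at most three arms, one particle's destination arm is always free, and all of these can be resolved consistently inside one second ENR --- e.g.\ by breaking ties with a fixed ordering of the arms --- matching $\TC=1$. In every case the pieces are cut out by finitely many equalities and inequalities among PL distance functions, hence are ENRs, and one checks that the selected geodesics vary continuously in the compact-open topology, including at the boundary strata where an escort excursion shrinks to length zero.

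The main obstacle is this assembly step: organizing the many combinatorial cases so that the generic rule extends as far as possible and every discontinuity of the ``obvious'' geodesic is isolated into the last one (resp.\ two) ENR(s), and verifying continuity across all stratum boundaries. For $\ell_2$ there is the additional analytic burden of proving the taut strings are genuinely minimizing (which the CAT(0)/convexity picture supplies) and of controlling the reflection behaviour near the wall so that the taut-string geodesics glue continuously onto the synchronized ones as the obstacle is approached tangentially.
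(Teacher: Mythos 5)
Your lower-bound reduction is fine (one small slip: $\binom{k-1}{2}$ is the circle count for the \emph{unordered} configuration space; the ordered $F(G,2)$ is a connected double cover with more circles, which does not change the values $\TC=1$ for $k=3$ and $\TC=2$ for $k\ge4$), and your observation that $G\times G$ is CAT(0) and $\{d(a_1,a_2)<\varepsilon\}$ is a convex neighborhood of the diagonal, so that $\ell_2$-geodesics in $\F_\varepsilon$ are taut strings around a convex obstacle, is a genuine alternative to the paper's representability/unfolding argument (Lemma \ref{lem:rep}) for describing geodesics locally. However, from that point on your text is a plan rather than a proof, and the step you explicitly defer --- organizing the non-generic configurations into exactly one further ENR for $k=3$ and two for $k\ge4$ with continuous selections, and checking continuity at all boundary strata --- is not a routine verification; it is the actual content of the upper bound (Propositions \ref{prop:x12d} and \ref{prop:propb} and the $\ell_1$ compatibility analysis).

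Concretely: (i) for $k=3$ the cut locus is $X_1^-\cup X_2^{eq}$ (all four points on one arm with reversed orientation, together with the ``opposite'' two-arm configurations where the two escort orders tie), and your proposed tie-break ``by a fixed ordering of the arms'' is never shown to give a rule that is continuous at the points of $X_1^-$ arising as limits of $X_2^{eq}$ (where $a_2=b_1$ or $a_1=b_2$ sits at the vertex); the paper needs a specific cyclic rule and a limit-point check there, and the delicacy is real, since for $k>3$ \emph{no} compatible rule on $X_1^-\cup X_2^{eq}$ exists, forcing a different grouping. Your supporting claim that ``one particle's destination arm is always free'' is false exactly in these cut-locus cases (in $X_1^-$ both destinations lie on the occupied arm; in $X_2^{opp}$ both destination arms are occupied). (ii) For $k\ge4$ you identify the bad set only as the four-arm equal-length locus, but the cut locus also contains \emph{all} orientation-switch configurations ($X_1^-$ and all of $X_2^-$), because the empty arm used for the switch must be chosen among several; covering $X_1^-\cup X_2^-\cup X_4^{eq}$ by only two additional ENRs is nontrivial, and the paper does it by merging $X_2^{eq}$ into the generic piece (itself requiring a closure/limit argument) and splitting the rest as $X_1^-\cup X_4^{eq}$ and $X_2^n$; your ``everything else in two pieces'' does not specify or justify such a split. (iii) In the $\ell_1$ case geodesics are massively non-unique, so the issue is a \emph{continuous selection}, and moreover the minimizing escort order can differ between the two metrics (the paper's example with $d_{1,1}<d_{2,1}$ but $d_{2,2}<d_{1,2}$), so the $\ell_2$ stratification cannot be reused verbatim: the equal/unequal strata must be redefined with respect to $\ell_1$ and the compatibility rechecked, which your sketch does not address. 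Until these three items are carried out, the upper bounds $\GC\le1$ ($k=3$) and $\GC\le2$ ($k\ge4$) are not established.
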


\begin{thm}
\label{GCthmun}
Let $G$ be a tree and let $\C$ be the unordered two-point configuration space of $G$.  Then
\begin{enumerate}
\item If $G$ is homeomorphic to an interval, then $\GC(\C,\ell_1) = \TC(\C) = 0$.
\item If $G$ is the $\Y$-graph, $\GC(\C,\ell_1) = \TC(\C) = 1$.
\item Otherwise, $\GC(\C,\ell_1) = \TC(\C) = 2$.
\end{enumerate}
\end{thm}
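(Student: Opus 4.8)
The plan is to take the lower bounds from topology and meet them with explicit geodesic motion planners. Since $\GC(X,g)\ge\TC(X)$ for every metric $g$, the lower bounds follow once we recall the known values of $\TC(\C)$: when $G$ is an interval, $\C$ is contractible, so $\TC(\C)=0$; when $G=\Y$, $\C$ is homotopy equivalent to a circle (Abrams, Ghrist), so $\TC(\C)=1$; and for every other tree $\C$ has first Betti number at least $2$, whence $\TC(\C)\ge2$, while $\TC(\C)\le2$ is also known. Thus the content of the theorem is the construction of geodesic motion planners with $1$, $2$, resp.\ $3$ rules. Case (a) is immediate: realizing $G$ as a Euclidean interval $[0,L]$ identifies $\C$ with the open triangle $\{(x,y):0\le x<y\le L\}$ via the increasing representative of an unordered pair, and the rearrangement inequality shows the induced $\ell_1$ distance is $|x-x'|+|y-y'|$ with the increasing-to-increasing matching optimal; for $a=(x,y)$ and $b=(x',y')$ the segment $t\mapsto\bigl((1-t)x+tx',\,(1-t)y+ty'\bigr)$ stays in the triangle (its second coordinate minus its first remains positive), has $\ell_1$-length exactly $\ell_1(a,b)$, and varies continuously with $(a,b)$, so it is a single globally defined GMPR and $\GC(\C,\ell_1)=0$.

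For (b) and (c) we exploit the feature of the $\ell_1$ metric stressed in the Remarks: the $\ell_1$-length of a motion equals the sum of the lengths of the two individual particle paths, so it is insensitive to the relative speeds. Hence every minimal $\ell_1$-geodesic from $a=\{a_1,a_2\}$ to $b=\{b_1,b_2\}$ is, up to reparametrization, obtained by choosing a bijection $\sigma$ of $\{1,2\}$ that minimizes $d(a_1,b_{\sigma1})+d(a_2,b_{\sigma2})$, letting each particle traverse the unique arc of $G$ between its matched endpoints, and choosing a collision-free monotone schedule; geodesic completeness of $\C$ guarantees such a schedule exists, and comparing with the swapped matching (which is strictly cheaper whenever the two arcs would have to be traversed in opposite senses along a shared segment) shows that for an optimal $\sigma$ the arcs overlap only coherently, so the blocking can always be resolved. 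Let $E_0$ be the set of pairs $(a,b)$ for which the optimal matching is unique and an arc-pair together with a schedule can be chosen continuously near $(a,b)$; on $E_0$ we use that canonical rule. The complement $\Sigma\coloneqq(\C\times\C)\setminus E_0$ consists of pairs where two matchings tie so as to give genuinely distinct geodesics, or where a particle transits an essential vertex and the branch onto which the other particle must yield is not locally determined; $\Sigma$ is a closed subcomplex, hence an ENR. It remains to cover $\Sigma$ by one rule in case (b) and by two rules in case (c).

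For $G=\Y$, using the cyclic symmetry of the three edges one checks that $\Sigma$ is exactly the set of pairs forcing the two particles to exchange which edges they occupy by routing past the central vertex, an exchange realizable in two ways of equal length; fixing a cyclic order of the edges and always choosing the exchange that sends the particle of $a$ lying deeper in its edge onto the particle of $b$ on the cyclically next occupied edge yields one continuous geodesic selection on all of $\Sigma$, so $E_0\sqcup\Sigma$ gives $\GC(\C,\ell_1)\le1$. For a general $G$ that is neither an interval nor $\Y$, the tree has a vertex of degree $\ge4$ or at least two essential vertices, and no single selection can work on all of $\Sigma$: near a vertex of degree $k\ge4$ the yielding branch ranges over at least three choices, and the obstruction is of the same nature as the one governing the star $S_k$ in Theorem \ref{thm:gcl2}(2); near distinct essential vertices the corresponding pieces of $\Sigma$ are disjoint, but there are several of them. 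In each case we exhibit a partition $\Sigma=\Sigma_1\sqcup\Sigma_2$ into ENRs, each carrying a continuous geodesic selection, so that $E_0\sqcup\Sigma_1\sqcup\Sigma_2=\C\times\C$ gives $\GC(\C,\ell_1)\le2$.

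The hard part is this last decomposition: one must show that $\Sigma$ --- a union, over all essential vertices of $G$ and both orderings of $a$ and $b$, of products of a ``constrained near a vertex'' locus with a matching destination locus --- always splits into exactly \emph{two} ENRs admitting continuous geodesic selections, never three, so that the upper bound meets the $\TC$ lower bound; and, intertwined with this, one must check continuity of the canonical rule on $E_0$ and of the two selections along the strata where the combinatorial type of the optimal route jumps (a particle entering or leaving an edge, two matchings trading optimality, a yielding particle switching branch). The interval and $\Y$ cases are the base of this analysis, and the computation for stars of degree $\ge4$ in Theorem \ref{thm:gcl2} is the local model that both forces the value $2$ and shows that it suffices.
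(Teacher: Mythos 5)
Your overall skeleton matches the paper's (lower bounds from the known $\TC$ values plus $\GC\ge\TC$; an explicit single rule for the interval; exploiting the speed-insensitivity of $\ell_1$; then a partition of $\C\times\C$ into two, resp.\ three, ENRs carrying continuous geodesic selections), and your case (a) argument is fine. But for cases (b) and (c) the proposal has a genuine gap: the upper bound is asserted rather than proved. First, your set $E_0$ is defined circularly (``an arc-pair together with a schedule can be chosen continuously near $(a,b)$''), and a locally continuous choice at each point does not produce a single continuous section on $E_0$; since in the $\ell_1$ metric almost every pair admits many geodesics (any reordering of the schedule works), you cannot fall back on uniqueness the way one does on the complement of the cut locus, so ``we use that canonical rule'' names no rule. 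Second, and more importantly, the entire content of the theorem for a general tree is the decomposition of the singular set into exactly two further ENRs with compatible continuous selections, and you explicitly defer this (``the hard part is this last decomposition'') without exhibiting the partition, the selections, or the continuity checks at the degenerations where the combinatorial type jumps. That is precisely where the paper does its work: it classifies the convex hull $S(Q)$ of the four points into the types $Y_1$, $Y_2$, $X$, $H$, $I$ (Proposition \ref{types}), refines $I$ into $I_1,I_2,I_3$ using a global arm numbering and the dot colors, partitions $\C\times\C$ into $E_1,E_2,E_3$ (Proposition \ref{3setsun}), specifies sequential or simultaneous uniform motions on each piece, and verifies compatibility at every limiting diagram; for $G=\Y$ it builds a two-set partition with carefully tuned schedules (the explicit $t_0$ in Proposition \ref{Yun}) exactly so that the $Y$-diagram motions limit onto the $I$-diagram motions.

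Two further points. Your $\Y$-graph rule (``send the deeper particle to the particle on the cyclically next occupied edge'') is only a sketch: the delicate issue is continuity as dots reach the vertex or as a $Y_2$ configuration degenerates to an $I$ configuration, and nothing in the proposal checks this, whereas it is the bulk of the paper's Proposition \ref{Yun}. Also, your appeal to Theorem \ref{thm:gcl2} for stars of degree $\ge4$ as a ``local model that shows the value $2$ suffices'' does not transfer: that theorem concerns the ordered $\varepsilon$-configuration space, and the paper's unordered argument is independent of it; moreover the lower bound $\TC(\C)\ge2$ is cited from the literature rather than deduced from a Betti number count, which by itself would not justify it. As written, the proposal reproduces the strategy but omits the construction that constitutes the proof.
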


For a tree $G$, the topological complexities of the configuration spaces $F(G,2)$ and $C(G,2)$ are well-known (see \cite{Far2}); the lower bounds for $\GC$ follow from this and the fact that $F(G,2)$ deformation retracts to $F_\epsilon$ (see Lemma \ref{lem:defret}).  We show the upper bounds by constructing explicit geodesic motion planners. The general strategy for constructing geodesic motion planners on a metric space $(X,g)$ is to analyze the structure of the \emph{total cut locus}, which we define as follows:
\[
\cl = \left\{ (a,b) \in X \times X \ \big| \ \mbox{ there exist multiple geodesic paths from } a \mbox{ to } b\right\}.
\]
On the complement of $\cl$, the geodesic is uniquely determined, and there is a well-defined map $\cl^c \to GX$ which sends a pair of points to the unique geodesic connecting them.  If $X$ is a proper metric space, this map is continuous (see \cite{BridsonHaefliger}, Corollary I.3.13), yielding a GMPR on $\cl^c$.  Thus to construct a geodesic motion planner on a metric space $(X,g)$, it suffices to find GMPRs over the total cut locus $\cl$.

In the $\ell_1$ metric on $\C$, the only pairs of points in $\cl^c$ are those for which there is a path from the starting configuration to the ending configuration in which one particle does not move. Thus $\cl$ contains almost all of $\C \times \C$.  In particular, the rule on $\cl^c$ does not contribute much to the geodesic motion planner, and so the proof of Theorem \ref{GCthmun} still requires partitioning $\C \times \C$ into the appropriate number of ENRs, on each of which there is a continuous choice of a geodesic.  This is the content of Section \ref{unsec}.

On the other hand, an essential part of the proof of the $\ell_2$ case of Theorem \ref{thm:gcl2} is a careful analysis of $\cl$.  To illustrate the difficulties which may arise when determining the total cut locus, we offer a second example, depicted in Figure \ref{fig:exgeodpath}.  Due to the condition that the particles must stay distance $\varepsilon$ apart throughout their trajectories, the second (square) particle in Figure \ref{fig:exgeodpath} must move away from its destination to let the first particle pass.

\begin{figure}[ht!]
\centerline{
\includegraphics[width=1.7in]{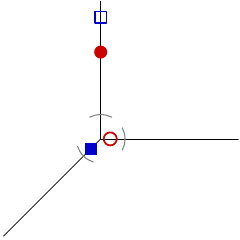} \hspace{.01in} \includegraphics[width=1.7in]{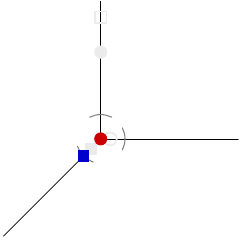} \hspace{.01in} \includegraphics[width=1.7in]{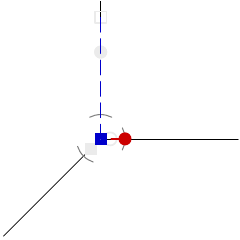} 
}
\caption{With $a$ and $b$ given as in the left image, the geodesic will travel the path which has intermediate stages depicted in the middle and right images.  After the particles arrive at the configuration in the right image, they move directly to their destinations.}
\label{fig:exgeodpath}
\end{figure}

Observe that there is a second feasible path from $a$ to $b$, in which the second particle moves into the right arm of the $\Y$-graph and the first particle moves into the bottom; next the second particle moves through the vertex until both particles can travel directly to their destinations.  Although this path appears almost obviously longer, note that in the limit, taken as the points $a_2$ and $b_1$ approach the vertex, the two paths have equal length.  Thus it is important to carefully determine the geodesics based on the relative locations of $a$ and $b$.  In particular, we will see that most points of the total cut locus require an \emph{orientation switch}, in which an empty arm is used to allow the particles to reconfigure (see Figure \ref{fig:x1x2}, right).  When an orientation switch is necessary, there are two possible considerations: which particle will allow the other to pass, and which arms the particles will use to execute the orientation switch.  These considerations are discussed in greater depth in Section \ref{sec:totalcutlocus}.

\begin{remark} It is interesting to note, in the case such that $G$ is homeomorphic to an interval and $\C$ is considered with the $\ell_1$ metric, that both the total cut locus and its complement are nonempty, yet there exists a single GMPR on the entirety of $\C \times \C$.  This serves as a counterexample to the somewhat intuitive notion that, by definition of $\cl$, a GMPR on $\cl^c$ should not be compatible with one on $\cl$.
\end{remark}


\section{The proof of Theorem \ref{thm:gcl2}: Geodesic motion planning on $(\F_\varepsilon, \ell_i)$}

To begin, we exclusively consider the $\ell_2$ metric on $\F_\varepsilon$.  In Section \ref{sec:l1} we make appropriate modifications to the $\ell_2$ case to establish the $\ell_1$ case.  The main difficulty in the proof of the $\ell_2$ case of Theorem \ref{thm:gcl2} is to determine the total cut locus of $\F_\varepsilon$, so that geodesic motion planning rules (GMPRs) can be constructed and the upper bound on $\GC$ can be established.  Before launching into this analysis, we formalize the lower bound in terms of $\TC(F(G,2))$.

\begin{lem}\label{lem:defret} Let $G$ be a tree and suppose that $\varepsilon > 0$ is not larger than the length of any edge.  Then there is a ($\Z_2$-equivariant) deformation retraction from $F(G,2)$ to $\F_\varepsilon$.
\end{lem}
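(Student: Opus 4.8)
The plan is to construct the deformation retraction explicitly by "pushing apart" pairs of points that are closer than $\varepsilon$, moving each point radially away from the other along the unique geodesic arc in the tree $G$ joining them. Since $G$ is a tree, for any $(a_1,a_2)$ with $0 < d(a_1,a_2) < \varepsilon$ there is a unique embedded arc $[a_1,a_2] \subset G$ realizing $d(a_1,a_2)$; parametrize it by arc length. The idea is to flow $a_1$ toward the $a_1$-end of this arc and $a_2$ toward the $a_2$-end, extending the arc past its endpoints into $G$ as needed, until the two points are distance exactly $\varepsilon$ apart. The hypothesis that $\varepsilon$ does not exceed the length of any edge is exactly what guarantees there is "enough room": starting from a configuration on an edge, one can always extend a distance $\varepsilon$ along that edge (or at worst reach a vertex and continue, but the point is that the two points never need to pass each other, so no collision occurs). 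The map is symmetric in $a_1,a_2$, hence $\Z_2$-equivariant; pairs already at distance $\ge \varepsilon$ stay fixed, so it restricts to the identity on $\F_\varepsilon$.

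First I would set up the retraction map $r: F(G,2) \to \F_\varepsilon$ carefully. For $(a_1,a_2) \in F(G,2)$, if $d(a_1,a_2) \ge \varepsilon$ set $r(a_1,a_2) = (a_1,a_2)$. Otherwise, let $\delta = \varepsilon - d(a_1,a_2) > 0$; we must distribute this extra separation between the two points. The cleanest choice is to move $a_2$ away from $a_1$ by $\delta$ along the arc, keeping $a_1$ fixed — but this is not symmetric, so instead move each of $a_1$ and $a_2$ away from the other by $\delta/2$. Here "move $a_i$ away from the other by $t$" means: travel distance $t$ along $G$ starting at $a_i$ in the direction opposite to the geodesic toward the other point; one must check this is well-defined and continuous, which requires choosing, at a vertex, a consistent branch — but since $\delta/2 < \varepsilon/2 \le (\text{edge length})/2$, one never reaches a vertex, so the motion stays on the edge containing $a_i$ and the choice issue does not arise. (If $a_i$ is itself a vertex one moves onto any adjacent edge not containing the other point; continuity must be checked at such configurations, and this is the one slightly delicate point.) Then the straight-line homotopy $H_s$, $s \in [0,1]$, which moves each point a fraction $s$ of the way along its prescribed arc, gives the deformation retraction: $H_0 = \mathrm{id}$, $H_1 = r$, $H_s$ fixes $\F_\varepsilon$ pointwise, and each $H_s$ lands in $F(G,2)$ since separations only increase.

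The main obstacle I anticipate is verifying continuity of $r$ (equivalently of $H$) at the boundary configurations: where $d(a_1,a_2) = \varepsilon$ exactly (matching the identity on $\F_\varepsilon$ with the nontrivial formula just inside), and where $a_1$ or $a_2$ passes through a vertex of $G$ so that the "opposite direction" arc changes which edge it lies on. The first is handled because $\delta \to 0$ as $d(a_1,a_2) \to \varepsilon^-$, so the displacement vanishes continuously. The second is where the edge-length hypothesis does real work, together with the fact that the two points never cross: the direction "away from the other point" varies continuously as long as the point being moved is not between a vertex and the other point in a degenerate way, and the uniqueness of arcs in a tree makes the relevant map $G \times G \setminus \Delta \to (\text{arcs})$ continuous. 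I would phrase this using the path metric explicitly and reduce to checking continuity on each product of edges, then gluing. Everything else — $\Z_2$-equivariance, that $H_s$ stays in $F(G,2)$, that $r|_{\F_\varepsilon} = \mathrm{id}$ — is immediate from the symmetric construction and the monotonicity of separation under the flow.
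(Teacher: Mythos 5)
There is a genuine gap in the construction. Your rule ``move each $a_i$ a distance $\delta/2$ away from the other point'' runs into vertices in exactly the situation you claim cannot occur, and the inequality you give does not rule it out: $\delta/2<\varepsilon/2$ bounds how far a point travels, but it says nothing about how far $a_i$ is from the vertex it is traveling \emph{toward}. For instance, take $a_1$ and $a_2$ on the same edge, both within distance $10^{-3}$ of a degree-three vertex $v$, with $a_1$ the nearer one; then ``away from $a_2$'' points at $v$, and $a_1$ must be pushed well past $v$, forcing a choice among the other edges at $v$. No such choice can be made continuously: configurations with $a_1$ slightly beyond $v$ on one branch or another are arbitrarily close to each other and to the configuration with $a_1=v$, but your rule sends them to points on different branches a definite distance $\approx\delta/2$ apart, so the map $r$ (and the homotopy $H_s$) is discontinuous at every such configuration no matter which branch is selected. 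The parenthetical ``move onto any adjacent edge not containing the other point'' therefore does not merely need checking; it is the step that fails, and the symmetric $\delta/2$--$\delta/2$ split does not help.

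The paper's proof is designed precisely to avoid ever pushing a point past a vertex, at the cost of being case-based and non-uniform in how the separation $\varepsilon$ is distributed: if one point is at a vertex $v$, it is held fixed and the other is moved out to distance $\varepsilon$ from $v$ along its own edge; if both lie on one edge, they are moved apart uniformly, and a point that reaches a vertex is parked there while the other point finishes the separation; if a vertex $v$ lies strictly between them, they are moved apart at speeds proportional to their distances to $v$. The hypothesis $\varepsilon\le$ (every edge length) guarantees each of these motions stays inside a single edge, so no branch choice is ever needed, and the three regimes match up continuously at their common boundary (a point sitting at a vertex). To repair your argument you would have to replace the ``always move distance $\delta/2$ in the away direction'' rule by something of this kind -- e.g., cap each point's motion at the first vertex it meets and transfer the remaining displacement to the other point -- at which point you have essentially reproduced the paper's proof; the rest of your outline (equivariance, fixing $\F_\varepsilon$, continuity at $d(a_1,a_2)=\varepsilon$) is fine.
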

\begin{proof} Let $(a,b)\in F(G,2)$ with $\bd(a,b)\leq \varepsilon$. If $a$ or $b$ is a vertex $v$, move the other one along its edge to distance $\varepsilon$ from $v$.  If $a$ and $b$ lie on the same edge, move them apart uniformly until they are at distance $\varepsilon$ from one another. If this motion causes one of them to reach a vertex $v$, stop it at the vertex and move the other one to distance $\varepsilon$ from $v$. If a vertex $v$ lies between $a$ and $b$, move them apart with speed proportional to their distances to $v$, until they are at distance $\varepsilon$ from one another.
\end{proof}

The remainder of this section is dedicated to establishing the upper bounds on $\GC$.  We begin by introducing a planar representation of the configuration space $\F_\varepsilon$ which is convenient for depicting certain paths in $\F_\varepsilon$. Consider $(a,b) \in \F_\varepsilon \times \F_\varepsilon$, so that $a_1$, $a_2$, $b_1$, and $b_2$ each lie on some open arm of the $\Y$-graph.  We define
\[
Z \coloneqq Z(a,b) = \left\{ c = (c_1,c_2) \in \F_\varepsilon \ \big| \ c_i \mbox{ is in the same arm as } a_i \mbox{ or } b_i \right\}.
\]
We emphasize that the set $Z$ depends on the points $a$ and $b$, hence will change based on the locations of $a_1$, $a_2$, $b_1$, and $b_2$.

Consider particles $a_1$ and $a_2$ in the top arm of the $\Y$-graph, $b_1$ in the right arm, and $b_2$ in the bottom arm (see Figure \ref{fig:a1a2}).  In this example, $Z$ consists of points $c \in \F_\varepsilon$ such that $c_1$ is in the top or right arm, and $c_2$ is in the top or bottom arm.

The right side of Figure \ref{fig:a1a2} depicts a \emph{representation} of the set $Z$.  The choice of representation is indicated by the directed arcs labeled $x$ and $y$ in the left image.  These arcs define the meaning and orientation of the $x$ and $y$ axes in the right image.  In this case, the $x$-axis \emph{always indicates the position of the first particle}, as follows: the negative $x$-axis represents the negative distance from the vertex to the first particle, assuming that the first particle is in the top arm; the positive $x$-axis represents the distance from the vertex to the first particle, assuming that the first particle is in the right arm.  The negative (positive) $y$-axis is similar; it always indicates distance for the second particle, with respect to the top (bottom) arm.  The interior of the rectangular strip is \emph{forbidden}; for any point inside, the distance between the particles is less than $\varepsilon$, hence the point is not an element of $\F_\varepsilon$.

If the first particle enters the bottom arm, or if the second particle enters the right arm, the configuration of the particles ceases to be an element of $Z$, hence is not representable on the axes shown.  Thus we sometimes refer to elements/subsets of $Z$ as \emph{representable}.

\begin{figure}[ht!]
\centerline{
\includegraphics[width=1.7in]{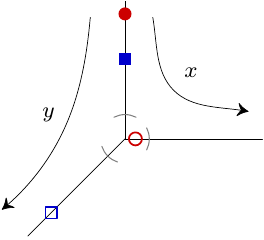} \hspace{.8in} \includegraphics[width=1.7in]{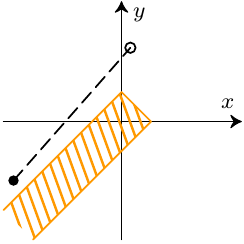}
}
\caption{Left: An element of $\F_\varepsilon \times \F_\varepsilon$ such that $a_1$ and $a_2$ share the top arm; the directed arcs indicate the meaning and orientation of the axes on the right side.  Right: Representations of the initial point $a$ (solid) and destination point $b$ (hollow), along with the unique geodesic connecting them.}
\label{fig:a1a2}
\end{figure}

In the next sections, we rely primarily on geometric intuition to present the GMPRs, and we use the representation only as a visual aid to depict the geodesic between $a$ and $b$.  We show in Section \ref{sec:rep} that the notion of representability can be used to formalize these intuitive statements.  For example, by noting that the map taking $(Z,\ell_2)$ to $(\R^2,\ell_2)$ is an isometry, it is often convenient to argue the uniqueness of geodesics in $\F_\varepsilon$ by using the uniqueness of geodesics in the corresponding representation.  As a related example, in the left side of Figure \ref{fig:a1a2}, it is clear that a geodesic path from $a$ to $b$ has the property that if the particles follow the geodesic, the first particle will not leave the top arm, and the second particle will not enter the top arm.  We tacitly assume such statements in the following sections before verifying them in Section \ref{sec:rep}.

\subsection{Geodesic motion planning on $(\F_\varepsilon, \ell_2)$ for the $\Y$-graph}

We begin our investigation in the case of $k=3$ arms, though the methods here generalize to higher values of $k$.  In this subsection, $G$ always refers to the $\Y$-graph, and $\F_\varepsilon$ always refers to the $2$-point ordered $\varepsilon$-configuration space of $G$, considered with the $\ell_2$ metric.

We define the following partition of $\F_\varepsilon \times \F_\varepsilon$.

\begin{definition}
\label{def:partition} We partition $\F_\varepsilon \times \F_\varepsilon$ into the following subsets.  In the notation below, a subscript indicates the exact number of arms which the points occupy; a superscript indicates whether the orientation of the initial configuration $a$ agrees with (``+'') or disagrees with (``-'') the orientation of the target configuration $b$.  In particular:
\begin{enumerate}
\item $X_1^+$ (resp.\ $X_1^-$) consists of points $(a,b) \in X$ such that $a_1$, $a_2$, $b_1$, $b_2$ all lie on a single arm of $\Y$ (vertex included), and such that the relative orientation of the starting points $a_1$ and $a_2$ \emph{agrees with} (resp.\ \emph{disagrees with}) that of $b_1$ and $b_2$.
\item $X_2^+$ (resp.\ $X_2^-$) consists of points $(a,b) \in X$ such that $a_1$, $a_2$, $b_1$, $b_2$ all lie on exactly two arms of $\Y$ (vertex included), and such that the relative orientation of the starting points $a_1$ and $a_2$ \emph{agrees with} (resp.\ \emph{disagrees with}) that of $b_1$ and $b_2$; see Figure \ref{fig:x1x2}, left (resp.\ right).
\item $X_3$ consists of points $(a,b) \in X$ such that each of the three arms of $\Y$ (vertex excluded) contains at least one of $a_1$, $a_2$, $b_1$, $b_2$ (see Figures \ref{fig:a1a2}, \ref{fig:a1b2}, and \ref{fig:a1b1}).
\end{enumerate}
\end{definition}

\begin{figure}[ht!]
\centerline{
\includegraphics[width=1.7in]{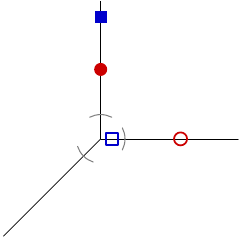} \hspace{.8in} \includegraphics[width=1.7in]{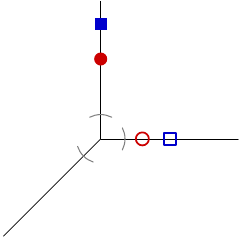}
}
\caption{Left: An example configuration in $X_2^+$; the particles can travel directly to their destination points.  Right: An example configuration in $X_2^-$; the particles must undergo an orientation switch before traveling to their destination points.}
\label{fig:x1x2}
\end{figure}

\begin{prop}
\label{prop:x3}
The sets $X_1^+$, $X_2^+$, and $X_3$ are disjoint from the total cut locus $\cl$.
\end{prop}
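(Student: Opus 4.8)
The plan is to prove that for every $(a,b)$ in $X_1^+$, $X_2^+$, or $X_3$ there is a \emph{unique} minimal geodesic from $a$ to $b$; since $\F_\varepsilon$ is geodesically complete such a geodesic exists, so it suffices to exclude a second one. I will argue inside the representation $Z=Z(a,b)$, using two facts established in Section~\ref{sec:rep}: that the metric $Z$ inherits from $\F_\varepsilon$ is the Euclidean one on its planar image, and that every $\F_\varepsilon$-geodesic between two representable points stays in $Z$ (for the configurations here, any arm not occupied by one of a particle's two endpoints is either empty or a destination arm of the \emph{other} particle, so entering it cannot shorten a path). Granting these, a geodesic from $a$ to $b$ is precisely a shortest path in $Z$, and the problem becomes planar.

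Let $I_1$ and $I_2$ be the unions of the arms meeting $\{a_1,b_1\}$ and $\{a_2,b_2\}$ respectively, each a geodesic arc of $G$ with isometric parametrization $\phi_j$, and set $R=I_1\times I_2$, a rectangle in the representation. Then $Z=R\setminus\Phi$, where $\Phi=\{(x,y):d(\phi_1(x),\phi_2(y))<\varepsilon\}$, and the key structural fact is that \emph{$\Phi$ is an open convex set}: because $G$ is a tree, hence $\mathrm{CAT}(0)$, the distance between two points moving along geodesics is a convex function of the two parameters, so $\Phi$ is a sublevel set of a convex function. I will show that in each case the component of $R\setminus\Phi$ containing the images of $a$ and $b$ is \emph{simply connected}, and then conclude using the classical fact that a shortest path between two points of a simply connected polygonal region is unique.

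For $(a,b)\in X_1^+$ the four points lie on one arm, so $R$ is a square and $\Phi=\{|x-y|<\varepsilon\}$ divides it into the two convex pieces $\{x-y\ge\varepsilon\}$ and $\{y-x\ge\varepsilon\}$; the hypothesis ``$+$'' says precisely that $a$ and $b$ lie in the same one. For $(a,b)\in X_2^+$ the four points occupy at most two arms, and running through the possibilities shows that either $\Phi$ is a strip separating $R$ into two convex pieces --- with ``$+$'' again putting $a$ and $b$ in the same one --- or $\Phi$ is a triangle at a corner of $R$ and $R\setminus\Phi$ is a single convex region; in every such case the relevant component is convex, so the unique geodesic is the straight segment from $a$ to $b$. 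The interesting case is $X_3$: all three arms are used, so exactly one of them carries two of the four marked points, and $R\setminus\Phi$ is a single, generally non-convex region. It is connected, since the (representable) geodesic from $a$ to $b$ lies in it; and a convex set $\Phi$ inside a rectangle $R$ with $R\setminus\Phi$ connected can meet $\partial R$ in only one connected arc --- two disjoint contact arcs would let the chord joining them disconnect the complement --- so $R\setminus\Phi$ is a disk. Finally $\Phi$ does reach $\partial R$: the doubly-occupied arm contributes either a far-leaf common to $I_1$ and $I_2$, hence a corner of $R$, or (when that arm constitutes all of $I_1$ or of $I_2$) the central vertex as a coordinate endpoint, and in either case the resulting boundary point of $R$ lies in $\Phi$.

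The step I expect to be the main obstacle is this last one: checking, across the three configuration types of $X_3$ (Figures~\ref{fig:a1a2}, \ref{fig:a1b2}, \ref{fig:a1b1}), that the doubly-occupied arm really does pin $\Phi$ to $\partial R$, so that $R\setminus\Phi$ is a disk and not an annulus. This is exactly what rules out the near-collision phenomenon of Figure~\ref{fig:exgeodpath}: the competing ``second feasible path'' there would wind around $\Phi$ on the side that $\partial R$ has sealed off, and the limiting equality of lengths is attained only as a marked point reaches the vertex, i.e.\ on the boundary of $X_3$. The remaining bookkeeping --- translating the ``$+$''/``$-$'' condition into same-component/different-component statements, and quoting the representability of geodesics from Section~\ref{sec:rep} --- is routine.
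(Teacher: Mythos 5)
Your argument is correct and shares the paper's backbone: pass to the planar representation $Z(a,b)$, use representability of geodesics (Lemma \ref{lem:rep} and the discussion in Section \ref{sec:rep}) together with the isometry $(Z,\ell_2)\to(\R^2,\ell_2)$, and prove uniqueness in the plane. Where you genuinely differ is in how the planar uniqueness is obtained. The paper reduces $X_3$ by symmetry to the three cases of Figures \ref{fig:a1a2}, \ref{fig:a1b2}, \ref{fig:a1b1} and reads the unique geodesic off the explicit representations, while for $X_1^+$ and $X_2^+$ it simply notes that the direct path is the unique geodesic. You instead prove one uniform statement: the forbidden set $\Phi\subset R=I_1\times I_2$ is convex (convexity of the tree metric along geodesics in a $\mathrm{CAT}(0)$ space; equivalently, in suitable coordinates $d$ is a maximum of linear functions, so $\Phi$ is a convex polygon), and in the $X_3$ cases the doubly-occupied arm forces $\Phi$ to reach $\partial R$ -- either at a common corner of $R$ (when that arm carries points of both particles) or along the vertex-edge of $R$ (when it is all of $I_1$ or $I_2$) -- so the component of $R\setminus\Phi$ containing $a$ and $b$ is simply connected and its shortest path is unique; for $X_1^+$ and $X_2^+$ the relevant component is outright convex. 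This buys a case-free argument that also isolates exactly why the near-tie of Figure \ref{fig:exgeodpath} can only be attained on the boundary of $X_3$, at the cost of importing two outside facts ($\mathrm{CAT}(0)$ convexity of the distance, and uniqueness of geodesics in simply connected planar regions) that the paper avoids by direct inspection of the three representations.

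One small repair: your justification that $R\setminus\Phi$ is connected (``the representable geodesic from $a$ to $b$ lies in it'') only shows that $a$ and $b$ lie in the same component, so the ``at most one contact arc'' bookkeeping is not actually licensed as written. But it is also not needed: once $\Phi$ is convex, open, and meets $\partial R$, the complement of $R\setminus\Phi$ in the sphere is connected, hence \emph{every} component of $R\setminus\Phi$ is simply connected, which is all your final step uses. Also note that Lemma \ref{lem:rep} as stated covers only $X_3$; for $X_1^+$ and $X_2^+$ your parenthetical reason (or the observation that a geodesic there must realize the product-metric lower bound, forcing monotone motion along each particle's tree-geodesic) should be cited instead, which is at the same level of rigor as the paper's own treatment of those cases.
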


\begin{proof}[Proof of Proposition \ref{prop:x3}]  For points in $X_1^+$ and $X_2^+$, there is a unique geodesic taking the direct path from $a$ to $b$.

By the definition of $X_3$, each of the three arms contains a point, hence two points must share one arm.  By utilizing symmetries, we may assume that $a_1$ shares an arm with another point.  Indeed, in the case that $a_2$ shares an arm with $b_1$ (resp.\ $b_2$), the argument follows from the case in which $a_1$ and $b_2$ (resp.\ $b_1$) share an arm, by the $\Z_2$-symmetry swapping $a$ and $b$.  In case the two destination points $b_1$ and $b_2$ share an arm, the argument follows by reversing the geodesic path in the case that $a_1$ and $a_2$ share an arm.  Thus we have three cases to consider: that $a_1$ shares an arm with $a_2$, $b_2$, or $b_1$.


These three cases are depicted with their representations in Figures \ref{fig:a1a2}, \ref{fig:a1b2}, and \ref{fig:a1b1}, respectively.  In each case the particles can move directly to their destinations and do not enter unused arms.  We reiterate that this intuitive proof can be formalized using the notion of representability, first by showing that any geodesic from $a$ to $b$ must be representable (see Lemma \ref{lem:rep}), and then by observing that the representation map $(Z,\ell_2) \to (\R^2,\ell_2)$ is an isometry, so that the unique geodesic between the representations of $a$ and $b$ corresponds to a unique geodesic between $a$ and $b$.
\end{proof}

\begin{figure}[ht!]
\centerline{
\includegraphics[width=1.7in]{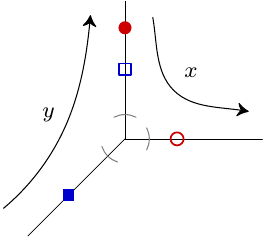} \hspace{.8in} \includegraphics[width=1.7in]{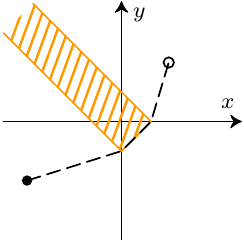}
}
\caption{Left: An element of $X_3$ such that $a_1$ and $b_2$ share the top arm.  Right: Representations of $a$, $b$, and the unique geodesic connecting them.}
\label{fig:a1b2}
\end{figure}

\begin{figure}[ht!]
\centerline{
\includegraphics[width=1.7in]{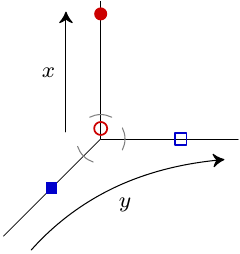} \hspace{1.2in} \includegraphics[width=1in]{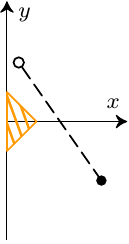}
}
\caption{Left: An element of $X_3$ such that $a_1$ and $b_1$ share the top arm.  Right: Representations of $a$, $b$, and the unique geodesic connecting them.}
\label{fig:a1b1}
\end{figure}

\subsection{The total cut locus of $\F_\varepsilon$}
\label{sec:totalcutlocus} We turn our attention to the total cut locus $\cl$ of $\F_\varepsilon$.  By Proposition \ref{prop:x3}, the total cut locus is contained in $X_1^- \cup X_2^-$.  If $(a,b) \in X_i^-$, $i=1,2$, any path from $a$ to $b$ must undergo an \emph{orientation switch}, in which an empty arm is used to allow the particles to reconfigure; for example, an orientation switch is necessary for the configuration depicted in the right side of Figure \ref{fig:x1x2}.  If $G$ is a star graph with $k > 3$ arms, then any $(a,b)$ which requires an orientation switch is an element of the total cut locus, because one always needs to designate which empty arm is used for the orientation switch.  In the case of the $\Y$ graph with $k = 3$ arms, not all of $X_2^-$ is contained in $\cl$.  In particular, the arm used for the orientation switch is pre-determined by the relative locations of $a$ and $b$, so $(a,b)$ is only in $\cl$ when there is not a preferred particle which moves onto the free arm to let the other pass.  We will formalize these ideas now.

As there exists a GMPR on the complement of the total cut locus, the $\ell_2$ case of Theorem \ref{thm:gcl2}(a) is a consequence of the following:

\begin{prop}
\label{prop:x12d}
There exists a GMPR on the total cut locus $\cl$.
\end{prop}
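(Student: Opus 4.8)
The plan is to show that the total cut locus $\cl$, which by Proposition \ref{prop:x3} lies inside $X_1^- \cup X_2^-$, admits a single GMPR. The first step is to handle $X_1^-$: here all four points lie on one arm, but the relative orientations of $a$ and $b$ disagree, so the particles must undergo an orientation switch using one of the two empty arms. By the $\Z_2$-symmetry of the $\Y$-graph permuting the two unused arms, there is no canonical choice, so all of $X_1^-$ lies in $\cl$. I would describe a single geodesic strategy parametrized continuously over $X_1^-$: send one designated particle (say the one that must ``pass'' the other, determined by the sign of the orientation and the positions) onto a fixed one of the empty arms just far enough to clear $\varepsilon$-distance, let the other particle slide to its target, and then retract the first particle to its target. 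Using the representation $(Z,\ell_2)\cong(\R^2,\ell_2)$ from Section \ref{sec:rep}, this path is the $\ell_2$-geodesic in the relevant slab, so it is genuinely length-minimizing, and its dependence on $(a,b)$ is continuous because the turning point is a continuous function of the coordinates. One must check that the ``designated particle'' and ``designated arm'' can be chosen continuously on all of $X_1^-$; since $X_1^-$ is connected and the choice is governed by smooth inequalities in the coordinates, this should go through, though it is the kind of place where a careful case split by which point is closest to the vertex is needed.

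The second and harder step is $X_2^- \cap \cl$. Here $a_1,a_2,b_1,b_2$ occupy exactly two arms, so there is exactly one empty arm, and the orientation switch must use it — so the ambiguity is no longer ``which arm'' but ``which particle moves onto the empty arm to let the other pass.'' As the introduction explains, for $k=3$ this choice is usually forced by the relative locations of $a$ and $b$ (one choice yields a strictly shorter path), and $(a,b)\in\cl$ precisely on the locus where the two choices give equal-length geodesics — the limiting configuration alluded to in Figure \ref{fig:exgeodpath}. So the real content is to identify this equality locus explicitly inside $X_2^-$, using the representation to compute the two competing geodesic lengths as functions of the distances of the four points from the vertex, and then to give one continuously-varying geodesic over it. I expect the main obstacle to be precisely this: pinning down $\cl \cap X_2^-$ as an explicit ENR and verifying that on it the ``tie-breaking'' geodesic (there will be a natural symmetric choice on the tie locus) varies continuously and agrees along the boundary with the rule already built on $X_1^-$ and — crucially — with the unique-geodesic rule on $\cl^c$, so that the whole thing patches into one GMPR on $\cl$.

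Finally I would assemble the pieces: the GMPR on $X_1^-$ and the GMPR on $\cl\cap X_2^-$ must be shown to glue continuously along their common boundary (configurations where a point passes through the vertex, moving from the ``two-arm'' regime to the ``one-arm'' regime). Since both rules were defined by the same geometric recipe — push a designated particle onto a designated empty arm, reconfigure, retract — compatibility on the overlap should reduce to checking that the designated particle/arm choices match up in the limit, which is a boundary computation rather than a new idea. The reversal symmetry $a\leftrightarrow b$ and the arm-permutation symmetries of $\Y$ can be invoked to cut the number of cases roughly in half. The conclusion is a single geodesic motion planning rule $s:\cl\to G\F_\varepsilon$, which together with the rule on $\cl^c$ gives $\GC(\F_\varepsilon,\ell_2)\le 1$, matching the lower bound from Lemma \ref{lem:defret} and $\TC(F(G,2))=1$.
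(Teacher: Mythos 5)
Your overall skeleton — split $\cl$ into $X_1^-$ and the tie locus inside $X_2^-$, give a rule on each, and check that they glue along the configurations where a point sits at the vertex — is the same as the paper's (Lemmas \ref{lem:x1min} and \ref{lem:x2c} plus the compatibility check in the proof of Proposition \ref{prop:x12d}, and, like the paper, you correctly note that the explicit algebraic description of the tie locus is not really needed). But there are two genuine problems. First, you insist that the rule on $\cl$ must ``agree \dots crucially with the unique-geodesic rule on $\cl^c$.'' This is not required by the definition of $\GC$ — the decomposition $\F_\varepsilon\times\F_\varepsilon=\cl^c\sqcup\cl$ only asks for a continuous section on each piece separately, with no matching along the common frontier — and in fact it is impossible: as remarked after Proposition \ref{prop:x12d}, no GMPR on $X_1^-$ or on $X_2^{eq}$ is compatible with the one on $\cl^c$ (that incompatibility is exactly why these sets lie in the cut locus). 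If your patching argument genuinely needs that agreement, it cannot be completed; if it doesn't, the sentence should be deleted, because as written it makes the plan unachievable.

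Second, your description of the geodesics on $X_1^-$ is wrong for most configurations: ``send one designated particle onto an empty arm, let the other slide to its target, then retract'' is in general not even a feasible path. When all four points lie on one arm with reversed orientation (e.g.\ the configuration of Figure \ref{fig:x1d}, or when the particles exactly exchange positions), \emph{both} particles must vacate the shared arm — one onto each of the two empty arms — before either can reach its target, and the two geodesics are distinguished by which particle takes which empty arm; the paper's rule (second particle uses arm $i+1$, first uses arm $i+2$, indices mod $3$) is a continuous choice, whereas ``$X_1^-$ is connected so the choice can be made continuously'' is not an argument (continuity must be checked against the actual components and against the limit points shared with $X_2^{eq}$). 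Relatedly, on the tie locus you offer only ``a natural symmetric choice,'' but there is no symmetric option — one of the two equal-length geodesics must be selected — and the entire difficulty of the proposition is that this selection and the $X_1^-$ selection must be engineered to match at the limit points where $a_1=b_2$ or $a_2=b_1$ lies at the vertex (the analysis of Figure \ref{fig:x2dc}). Deferring that as ``a boundary computation'' leaves the main content of the proof undone.
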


As $\cl = (X_1^- \cup X_2^-) \cap \cl$, we first consider the spaces $X_1^-$ and $X_2^-$ separately.

\begin{lem} The set $X_1^-$ is a subset of the total cut locus $\cl$, and there exists a GMPR on $X_1^-$.
\label{lem:x1min}
\end{lem}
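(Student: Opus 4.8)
The plan is to establish in turn the two assertions of the lemma: that every $(a,b)\in X_1^-$ lies in the total cut locus $\cl$, and that there is a single GMPR over all of $X_1^-$. To see $X_1^-\subseteq\cl$, fix $(a,b)\in X_1^-$, so $a_1,a_2,b_1,b_2$ all lie on one arm $\alpha$ of $\Y$ while the relative orientation of $a$ disagrees with that of $b$. The subspace $\{c\in\F_\varepsilon: c_1,c_2\in\alpha\}$ has two connected components — according to which particle is nearer the vertex — and $a$ and $b$ lie in different ones, so every path from $a$ to $b$, in particular every geodesic, must leave $\alpha$. I would then establish the sharper structural fact that every geodesic from $a$ to $b$ consists of an excursion of one particle into one empty arm together with an excursion of the other particle into the other empty arm: the particle that is inner at $a$ must vacate $\alpha$ onto an empty arm before the outer particle can pass it; once this has happened the formerly outer particle must in turn vacate $\alpha$ onto an empty arm so that the first particle can reach its (outer) destination; and the two excursions cannot share an arm, since two particles in a single arm keep their order, so whichever goes deeper into a shared arm could never get back out past the other. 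There are exactly two such routings, and the map interchanging the two empty arms is an isometry near the vertex (the path metric on $\Y$ is symmetric in its arms), carrying the shortest path of one routing to the shortest path of the other; these therefore have equal length and are both geodesics. Hence $(a,b)$ admits at least two geodesics, i.e.\ $(a,b)\in\cl$.

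Next I would build the GMPR. Write $X_1^-$ as the disjoint union of its connected components; each is determined by the occupied arm $\alpha$ and by which particle is inner at $a$, so on one component $\alpha$, and hence the (now ordered) pair $\beta,\gamma$ of empty arms, are constant. On that component let $s(a,b)$ be the shortest path from $a$ to $b$ that routes particle $1$ into $\beta$ and particle $2$ into $\gamma$; by the structural fact above this is an honest geodesic of $\F_\varepsilon$ (the two routing-specific shortest paths have the same length, so either realizes the global minimum). Every such path stays in $Z'\coloneqq\{c\in\F_\varepsilon: c_1\in\alpha\cup\beta,\ c_2\in\alpha\cup\gamma\}$, and by the representability results of Section~\ref{sec:rep} (cf.\ Lemma~\ref{lem:rep}) every routing-$\beta\gamma$ geodesic is representable there; since $(Z',\ell_2)$ is isometric to a planar region, geodesics in $Z'$ correspond to geodesics of that region. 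One checks that the forbidden set of the region is connected and meets the boundary of the enclosing rectangle in a single connected arc (near one corner), so its complement is simply connected; consequently the shortest path between prescribed endpoints is unique and depends continuously on them.

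Finally, as $(a,b)$ ranges over one component of $X_1^-$ the arms $\alpha,\beta,\gamma$ — and therefore the planar model and its forbidden set — do not vary; only the images of $a$ and $b$ move, and they do so continuously. Thus $s$ is continuous on each component, hence on all of $X_1^-$ since the components are open and closed in it, and $X_1^-$ is semialgebraic, hence an ENR; so $\{s\}$ is a GMPR on $X_1^-$. I expect the crux to be the structural claim of the first paragraph — that the orientation switch on $\Y$ is effected by precisely the two ``one excursion per particle, into distinct empty arms'' strategies, and that these have equal length — since it is simultaneously what forces $(a,b)$ into $\cl$ and what certifies that the path $s(a,b)$ built above is a genuine minimizing geodesic rather than merely the best member of a restricted family. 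The remaining ingredients (the planar chart, simple connectivity of the obstacle's complement, continuity of taut strings, the ENR property) are routine given the tools of Section~\ref{sec:rep}.
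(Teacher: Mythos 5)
Your proposal is correct and follows essentially the same route as the paper: you show the orientation switch forces each particle into a distinct empty arm, so there are exactly two routings whose minimizers have equal length (the paper gets this via the representability argument of Lemma \ref{lem:rep}, you via the arm-swap isometry), which places $X_1^-$ in $\cl$; the GMPR is then a locally constant choice of routing, with uniqueness and continuity of the chosen geodesic justified through the planar representation, just as in the paper (which fixes the convention that the second particle uses arm $i+1$). Your extra remarks (clopen components, the simply connected planar chart, ENR-ness) only make explicit what the paper leaves implicit.
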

 
\begin{proof}
We assume that no point is farther from the vertex than $a_1$, keeping in mind the possibility that $a_1 = b_2$.  In case $a_2$ is farthest, there is a symmetry swapping the particles, and if either $b_1$ or $b_2$ is farthest, then $a$ and $b$ may be swapped and the geodesic path may be reversed.

For points in $X_1^-$, an orientation switch is necessary, and there exist exactly two geodesics from $a$ to $b$.  In particular, order the arms clockwise, with arm $1$ at the top, and consider the ordering mod $3$.  If the points all lie on arm $i$, there is a unique geodesic for which the second particle uses arm $j$ for the orientation switch, for each $j \neq i$.

To see that there exist no other geodesics, consider the configuration depicted in Figure \ref{fig:x1d}.  According to the definition of $Z$, the only representable points are those for which both particles are in the top arm.  There are two natural ways to extend the representable region, depicted either in the left or the center of Figure \ref{fig:x1d}.  In either case, the unique geodesic has the representation depicted on the right (although the positive axes have different meaning depending on the choice).  If a path is not representable in either of these two representations, then at some time during the trajectory, both particles lie either in the open right arm or the open bottom arm.  Such a path is non-minimizing (see the displayed statement in the proof of Lemma \ref{lem:rep}).

\begin{figure}[ht!]
\centerline{
\includegraphics[width=1.7in]{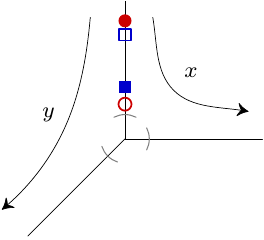} \hspace{.05in} \includegraphics[width=1.7in]{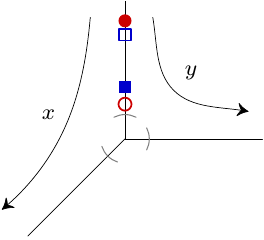} \hspace{.05in} \includegraphics[width=1.7in]{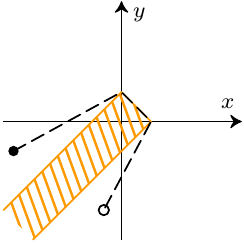}
}
\caption{Left and Center: An element of $X_1^-$ with two possible choices of representation.  Right: The unique geodesic in either representation.}
\label{fig:x1d}
\end{figure}

Now a GMPR on $X_1^-$ can be defined: choose the geodesic for which the second particle uses arm $i+1$ (so that the first particle uses arm $i+2$) for the orientation switch.
\end{proof}

All points of $X_2^-$ require orientation switches, but not all points are in the total cut locus.  Assume momentarily that $a_1$ lies on the top arm, that none of $a_2$, $b_1$, or $b_2$ lies above $a_1$, and that points not on the top arm lie on the bottom arm.  Then there are three possible permutations of the $a_i$ and $b_i$, from top to bottom: $a_1b_2b_1a_2$, $a_1b_2a_2b_1$, and $a_1a_2b_2b_1$ (it is possible that $a_1 = b_2$ or $a_2=b_1$ or $a_2=b_2$).  Other permutations beginning with $a_1$ have agreeing orientation and are not elements of $X_2^-$.

Considering cases dictated by the three possible permutations, as well as the number of $a_i$ and $b_i$ which lie on each arm, we see that there is a unique geodesic in the following cases:
\begin{itemize}
\item if the closed top arm (i.e.\ including the vertex) contains exactly three of the $a_i$ and $b_i$, or if the closed bottom arm contains exactly three of the $a_i$ and $b_i$, then a geodesic is uniquely determined.  See Figure \ref{fig:threeonone} (left and center) for two possible configurations -- other possibilities permute the $a_i$ and $b_i$ but have similar behavior.
\item if $a_1$ and $a_2$ share the open top arm and $b_1$ and $b_2$ share the open bottom arm, there exists a unique geodesic (see Figure \ref{fig:threeonone}, right).
\end{itemize}
Note that in the case of $k > 3$ arms, all such points lie in the total cut locus, because an arm choice is necessary -- see Section \ref{sec:stargraphs} for further discussion.

\begin{figure}[ht!]
\centerline{
\includegraphics[width=1.7in]{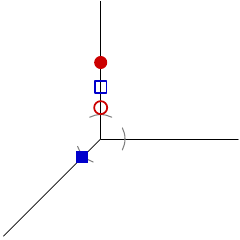} \hspace{.05in} \includegraphics[width=1.7in]{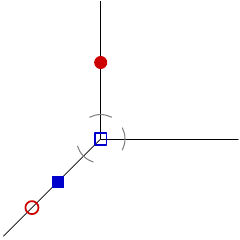} \hspace{.05in} \includegraphics[width=1.7in]{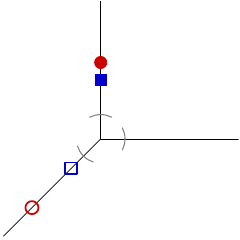}
}
\caption{Three elements of $X_2^-$ in the complement of the total cut locus.  Left: the circle moves into the right arm first.  Center: the square moves into the right arm first.  Right: The square moves into the right arm first.}
\label{fig:threeonone}
\end{figure}

Similar arguments may be made in the cases which do not adhere to our specific assumptions.  For example, if $b_1$ lies above $a_1$, there is a symmetry swapping the particles.  If either $a_2$ or $b_2$ lie above $a_1$, there is a symmetry swapping the initial configuration with the final configuration, and the unique geodesic is the reversed geodesic from the above case.  Finally, the same arguments can be made with respect to any of the three arms.

Thus it remains to consider the case in which $a_1$ and $b_2$ share an open arm, and $a_2$ and $b_1$ share a different open arm.  As such, we define
\[
\xopp = \left\{ (a,b) \in X_2^- \ \big| \ a_i \mbox{ shares an open arm with } b_{i+1} \right\}.
\]
A point of $\xopp$ is depicted in Figure \ref{fig:x2cut}, left.  For $(a,b) \in \xopp$, every path from $a$ to $b$ must undergo an orientation switch, and paths from $a$ to $b$ may be distinguished based on which particle moves through the vertex first for the orientation switch (i.e.\ to enter the empty arm so that the other particle may pass).  We refer to a path such that the $i$th particle passes through the vertex first as a \emph{path of type} $i$.

We claim that for each particle $i$, there exists a unique length-minimizing type $i$ path.  Indeed, any length-minimizing type $1$ path must pass through the point $p \in X$ depicted in Figure \ref{fig:x2cut}, right.  Now $(a,p)$ and $(p,b)$ are in the complement of the total cut locus by Proposition \ref{prop:x3}, so there are unique geodesics connecting $a$ to $p$ and $p$ to $b$.  Their concatenation is the unique length-minimizing type $1$ path from $a$ to $b$.  Similarly, there is a unique length-minimizing type $2$ path from $a$ to $b$.

\begin{figure}[ht!]
\centerline{
\includegraphics[width=1.7in]{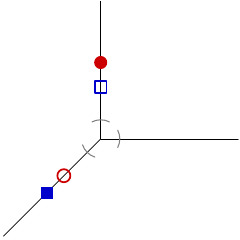} \hspace{.8in} \includegraphics[width=1.7in]{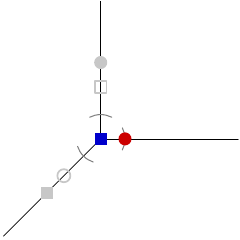}
}
\caption{Left: An element of $\xopp$.  Right:  A length-minimizing type $1$ path from $a$ to $b$ must pass through the depicted point $p$.}
\label{fig:x2cut}
\end{figure}

For at least one value of $i$, the unique length-minimizing type $i$ path is the minimal geodesic from $a$ to $b$.  Occasionally these two paths have equal length, in which case the point $(a,b)$ is in the total cut locus.  Such points are characterized by a certain algebraic condition relating the distances from $a_1$, $a_2$, $b_1$, and $b_2$ to the vertex, but it is not necessary to determine this condition explicitly.  Instead, we simply define
\[
X_2^{eq} = \left\{ (a,b) \in \xopp \ \big| \ \mbox{the unique length-minimizing type } i \mbox{ paths have equal length} \right\},
\]
and $X_2^n = X_2^- - X_2^{eq}$.  We have shown that $X_2^n$ is disjoint from the total cut locus $\cl$, despite the fact that orientation switches are needed for all points of $X_2^-$.  However, with $k > 3$ arms, every orientation switch requires a choice of empty arm to use for the switch, so $X_2^n$ is part of the total cut locus for star graphs (see Section \ref{sec:stargraphs}).

\begin{lem} There exists a GMPR on $X_2^{eq} = X_2^- \cap \cl$.
\label{lem:x2c}
\end{lem}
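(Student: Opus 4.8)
The plan is to produce a single continuous section $s : X_2^{eq} \to GX$ by always choosing the \emph{type $1$} length-minimizing path, and then to verify the three things this requires: that such a path is a minimal geodesic at every point of $X_2^{eq}$, that it varies continuously, and that $X_2^{eq}$ is an ENR.

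First I would record why the type $1$ path is a legitimate choice. By the discussion preceding the lemma, for every $(a,b) \in \xopp$ the unique length-minimizing type $1$ path is the concatenation of the unique geodesic from $a$ to the point $p = p(a,b)$ of Figure \ref{fig:x2cut} with the unique geodesic from $p$ to $b$, where $(a,p), (p,b) \in \cl^c$ by Proposition \ref{prop:x3}. On $X_2^{eq} \subseteq \xopp$ the type $1$ and type $2$ paths have equal length, and since one of them realizes $d(a,b)$, both have length $d(a,b)$; hence the type $1$ path is a minimal geodesic for every $(a,b) \in X_2^{eq}$. (Incidentally, the two paths are distinct — they differ in which particle first enters the empty arm — so $X_2^{eq} \subseteq \cl$, and combined with $X_2^n \cap \cl = \emptyset$ from the preceding discussion this yields the identification $X_2^{eq} = X_2^- \cap \cl$ asserted in the statement.) So I set $s(a,b)$ to be the type $1$ path from $a$ to $b$, parametrized proportionally to arclength.

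Next I would argue continuity of $s$. The configuration $p(a,b)$ is described explicitly from the distances of $a_1,a_2,b_1,b_2$ to the vertex — it is the configuration in which the first particle has moved just far enough into the empty arm for the second to pass — so $(a,b) \mapsto p(a,b)$ is continuous on $\xopp$, and $p \neq a$, $p \neq b$ always (in $p$ the first particle occupies the empty arm, a different open arm than in $a$ or in $b$). Precomposing the continuous canonical section $\cl^c \to GX$ (continuous by \cite{BridsonHaefliger}, Cor.\ I.3.13) with the continuous maps $(a,b)\mapsto(a,p(a,b))$ and $(a,b)\mapsto(p(a,b),b)$ into $\cl^c$ gives continuous families of geodesics $a\to p$ and $p\to b$ of positive length; their arclength-proportional concatenation is continuous into $PX$ with the compact-open topology and, by the previous paragraph, lands in $GX$. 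Thus $s$ is a GMPR on $X_2^{eq}$. Finally, $X_2^{eq}$ is an ENR: the tree metric is piecewise linear, so $\F_\varepsilon$ and the conditions cutting out $X_2^-$ and $\xopp$ are semialgebraic, and the lengths of the type $1$ and type $2$ paths are semialgebraic functions of $(a,b)$, whence $X_2^{eq}$ is semialgebraic and hence an ENR.

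The step I expect to be most delicate is the continuity claim — specifically, checking that $p(a,b)$ really does vary continuously across the various subloci of $\xopp$, including those where coincidences such as $a_1 = b_2$ or $a_2 = b_1$ occur, so that the compact-open continuity of the concatenated path holds uniformly. The remaining points are routine, relying only on the continuity of unique geodesics off the cut locus already established.
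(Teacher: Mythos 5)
Your argument does prove the statement as written, but it is not the paper's proof, and the difference in the choice of rule matters for how the lemma is used later. The paper's proof is a one-line definition: on $X_2^{eq}$, the particle that enters the empty arm $i$ for the orientation switch is the one beginning on the clockwise-adjacent arm $i+1$; continuity on $X_2^{eq}$ and the ENR property are left implicit (the combinatorial data of which particle occupies which arm is locally constant on $\xopp$, so continuity reduces to exactly the concatenation-through-$p$ argument you spell out, and in fact $p$ depends only on the empty arm and $\varepsilon$, not on the distances of the four points to the vertex). Your rule --- always take the type $1$ minimizer --- is equally valid for this lemma: on $X_2^{eq}$ both type $i$ minimizers realize the distance, your identification $X_2^{eq} = X_2^- \cap \cl$ is sound, and continuity via the canonical section on $\cl^c$ through the fixed intermediate configuration $p$, plus your ENR remark, give a complete (indeed more detailed) proof. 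What the paper's seemingly arbitrary choice buys, and yours does not, is compatibility in Proposition \ref{prop:x12d}: the decomposition there uses the single set $\cl = X_1^- \cup X_2^{eq}$, and the limits of your type-$1$ geodesics at points of $X_1^-$ with $a_2=b_1$ (or $a_1=b_2$) at the vertex depend on which arm the degenerating pair approaches along (the first particle executes the switch on one arm for one direction of approach and on a different arm for the other), so no GMPR on $X_1^-$ could be glued to your rule; the paper's clockwise-adjacency rule has direction-independent limits that agree with the rule of Lemma \ref{lem:x1min}. So your proof is acceptable for the lemma in isolation, but it could not be substituted into the proof of Proposition \ref{prop:x12d} without reworking that argument.
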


\begin{proof}
We define the GMPR on $X_2^{eq}$ so that the particle which enters arm $i$ for the orientation switch is the particle which begins on arm $i+1$, i.e.\ the arm adjacent to $i$ in the clockwise direction.
\end{proof}

Finally, we prove Proposition \ref{prop:x12d}, that there exists a GMPR on the total cut locus $\cl$.

\begin{proof}[Proof of Proposition \ref{prop:x12d}]
We have exhibited GMPRs on $X_1^-$ and on $X_2^{eq}$, so it remains to show that these are compatible on the union $\cl$.  In particular, we must check that the GMPRs agree at the points of $X_1^-$ which are limit points of $X_2^{eq} $: these are points such that $a_1 = b_2$ lie at the vertex, or $a_2 = b_1$ lie at the vertex.

\begin{figure}[ht!]
\centerline{
\includegraphics[width=1.7in]{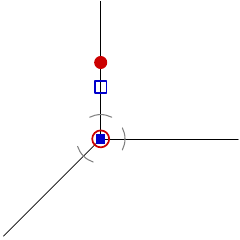} \hspace{.05in} \includegraphics[width=1.7in]{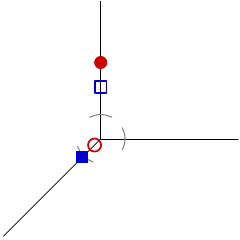} \hspace{.05in} \includegraphics[width=1.7in]{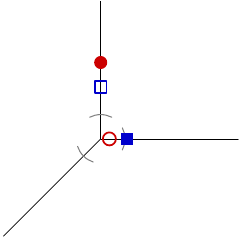}
}
\caption{Left: A limit point $(a,b)$ of $X_2^{eq}$ contained in $X_1^-$.  The GMPR indicates that the second particle enters the right arm, the first enters the bottom, and then both move to their destination.  Center: A possible point of a sequence converging to $(a,b)$.  The GMPR indicates that the second particle enters the right arm, the first enters the bottom, and then both move to their destination.  Right: A possible point of a sequence converging to $(a,b)$.  The GMPR indicates that the second particle stays in the right arm (moving back if necessary), the first particle enters the bottom arm, and then both move to their destination.}
\label{fig:x2dc}
\end{figure}

We consider a fixed limit point $(a,b) \in X_2^{eq}$; without loss of generality, we assume that $a_1$ and $b_2$ lie on the top arm and that $a_2$ and $b_1$ lie on the vertex; see Figure \ref{fig:x2dc}.  Then the GMPR would choose the geodesic for which the second particle moves into the right arm and the first particle moves into the bottom arm to let the second particle pass into the top arm.  Consider a sequence of points $(a^n,b^n) \in X_2^{eq}$ approaching $(a,b)$.  If for some $n$, $a_2^n$ and $b_1^n$ lie in the bottom arm, then the GMPR chooses the geodesic for which the second particle moves into the empty (right) arm and the first moves into the bottom arm.  If for some $n$, $a_2^n$ and $b_1^n$ lie in the right arm, then the GMPR chooses the geodesic for which the first particle enters the empty (bottom) arm (and if necessary, the second particle moves within the right arm to the boundary of the $\varepsilon$-ball around the vertex) to let the second particle pass.  Thus in the limit $(a,b)$, the limiting path is exactly the chosen geodesic at $(a,b)$.
\end{proof}

\begin{remark} To prove Theorem \ref{thm:gcl2}(a) in the $\ell_2$ case we exhibited two GMPRs: one on the total cut locus $\cl$ and one on its complement $\cl^c$.  Note that no GMPR on either $X_1^-$ or $X_2^{eq}$ is compatible with a GMPR on $\cl^c$.
\end{remark}

\subsection{Geodesic motion planning on $(\F_\varepsilon, \ell_2)$ for star graphs}
\label{sec:stargraphs}

We are now equipped to study the geodesic motion planning problem on a star graph $G$, with $k>3$ arms emanating from a single vertex.  The techniques are similar to those for the case $k = 3$, and many of the results from the previous section will be used.

Let $\F_\varepsilon$ be the $2$-point ordered $\varepsilon$-configuration space of $G$.  In addition to the subsets of $\F_\varepsilon \times \F_\varepsilon$ defined in Defintion \ref{def:partition}, we consider the set $X_4$ consisting of points $(a,b) \in \F_\varepsilon \times \F_\varepsilon$ such that there exist four arms of $G$ occupied.  Given $(a,b) \in X_4$, we say that a path from $(a,b)$ is \emph{type} $i$ if the $i$th particle passes through the vertex first.  Recall that type $i$ paths were defined in Section \ref{sec:totalcutlocus} for elements of $X_2^{opp}$; however, for $(a,b) \in X_2^{opp}$, the particle which moves through the vertex first does so to allow the other to pass, whereas for $(a,b) \in X_4$, the particle which moves through the vertex first may travel directly to its destination.

For each particle $i$, there is a unique length-minimizing path among those of type $i$.  At least one of these is a geodesic.  As in the case of $X_2$, we define:
\[
X_4^{eq} = \left\{ (a,b) \in X_4 \ \big| \ \mbox{the unique length-minimizing type } i \mbox{ paths have equal length} \right\},
\]
and $X_4^n = X_4 - X_4^{eq}$ (see Figure \ref{fig:x4un}).  Thus by definition, $X_4^{eq} = X_4 \cap \cl$.

\begin{figure}[ht!]
\centerline{
\includegraphics[width=1.7in]{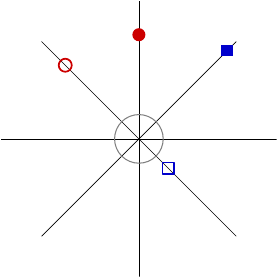} \hspace{.8in} \includegraphics[width=1.7in]{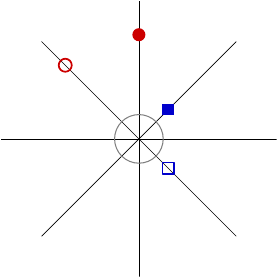}
}
\caption{Left: An element of $X_4^{n}$.  Right: An element of $X_4^{eq}$.}
\label{fig:x4un}
\end{figure}

Recall the discussion prior to Lemma \ref{lem:x2c}: with $k > 3$ arms, $X_2^{n}$ belongs to the total cut locus, since any orientation switch requires a choice of empty arm, and, unlike the situation on the $\Y$-graph,  there are now multiple such choices.

Now the sets $X_4^{n}$ and $X_4^{eq}$, together with $X_3 \cup X_2^+ \cup X_2^{eq} \cup X_2^n \cup X_1^+ \cup X_1^-$, partition $\F_\varepsilon \times \F_\varepsilon$.  The following result establishes three GMPRs on $\F_\varepsilon \times \F_\varepsilon$.

\begin{prop}
\label{prop:propb}
Let $G$ be a star graph with $k$ arms, $k > 3$.  The complement of the total cut locus is given by $X_4^{n} \cup X_3 \cup X_2^+ \cup X_1^+$, and there exist GMPRs on
\begin{enumerate}
\item $X_4^{n} \cup X_3 \cup X_2^+ \cup X_1^+ \cup X_2^{eq}$
\item $X_1^- \cup X_4^{eq}$
\item $X_2^{n}$,
\end{enumerate}
hence $\GC(\F_\varepsilon) \leq 2$.
\end{prop}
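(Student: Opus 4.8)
The plan is to establish the three GMPRs of Proposition \ref{prop:propb} by leveraging the machinery already developed for the $\Y$-graph, handling the genuinely new phenomenon — the need to \emph{choose} which empty arm executes an orientation switch — in a uniform, clockwise-indexed fashion. I would first confirm the claimed description of $\cl^c$: on $X_4^n$ the unique length-minimizing type $i$ path (for whichever $i$ realizes the minimum) is the unique geodesic, by the same concatenation-through-a-point argument used for $\xopp$ in Section \ref{sec:totalcutlocus}, noting that here the first particle through the vertex goes straight to its destination; on $X_3$, $X_2^+$, $X_1^+$ uniqueness is exactly Proposition \ref{prop:x3}. Conversely every point of $X_1^-$, $X_2^{eq}$, $X_2^n$, $X_4^{eq}$ lies in $\cl$: for $X_1^-$ and $X_4^{eq}$ and $X_2^{eq}$ this is as in the $\Y$ case, while for $X_2^n$ the new point is that although the \emph{type} is pinned down, the empty arm used for the switch is not, and with $k>3$ there are at least two such arms, giving distinct geodesics.

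For part (1), the GMPR on $\cl^c = X_4^n \cup X_3 \cup X_2^+ \cup X_1^+$ is the canonical one sending each pair to its unique geodesic, continuous by Corollary I.3.13 of \cite{BridsonHaefliger} as noted in the introduction; I then need to check this extends continuously across $X_2^{eq}$. On $X_2^{eq}$ I reuse the rule of Lemma \ref{lem:x2c} — the particle entering arm $i$ for the switch is the one starting on arm $i+1$ — and verify compatibility at limit points exactly as in the proof of Proposition \ref{prop:x12d}, the only difference being bookkeeping over more arms; the key observation is that a sequence in $\cl^c$ approaching a point of $X_2^{eq}$ forces the limiting geodesic to be the clockwise-designated one. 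For part (2), on $X_1^-$ I use the GMPR of Lemma \ref{lem:x1min} (second particle uses arm $i+1$), and on $X_4^{eq}$ I must supply a new rule: for a point of $X_4^{eq}$ the two competing type-$i$ paths have equal length, so I designate the type by, say, choosing the particle starting on the clockwise-minimal occupied arm to pass through first, and then — since with $k>3$ the designated particle's straight-line route may still be ambiguous only if... (it isn't, once the type is fixed the type-$i$ minimizer is unique). Compatibility of the $X_1^-$ rule with the $X_4^{eq}$ rule at their shared limit points (where an occupied arm degenerates to the vertex) is checked by the same limiting-path argument. For part (3), $X_2^n$ is disjoint from everything else in the partition except along lower-dimensional boundary pieces that already lie in $\cl^c$ or in the pieces handled by (1)–(2), so it can carry its own standalone GMPR: fix the type (unique, since $X_2^n$ excludes $X_2^{eq}$) and then pick the empty arm for the switch by a clockwise rule, e.g.\ the clockwise-first arm not occupied by any of $a_1,a_2,b_1,b_2$; continuity follows because the set of occupied arms is locally constant on the interior of each stratum and the resulting geodesic varies continuously with the endpoint data.

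Finally, I would note that each $E_i$ is an ENR — each is a finite union of pieces cut out by the path metric and finitely many linear inequalities/equalities in the arm-distance coordinates, hence semialgebraic, hence an ENR — and that the three sets partition $\F_\varepsilon \times \F_\varepsilon$ by construction, yielding $\GC(\F_\varepsilon) \le 2$. The main obstacle I anticipate is part (1): verifying that the canonical geodesic section on $\cl^c$ glues continuously to the clockwise rule on $X_2^{eq}$, because one must rule out a sequence in $X_4^n$ or $X_2^n$ (approaching a point of $X_2^{eq}$ where an arm collapses) whose unique geodesics converge to the \emph{wrong} one of the two equal-length type-$i$ paths; this requires the representability analysis of Section \ref{sec:rep} to pin down exactly which path survives in the limit, and is where the bulk of the careful work lies. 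The $X_4^{eq}$ compatibility in part (2) is a milder version of the same issue. Everything else is routine adaptation of the $k=3$ arguments.
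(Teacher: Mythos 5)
Your partition and your three rules are essentially the paper's (canonical rule on the cut-locus complement plus a lowest-index/clockwise rule on $X_2^{eq}$; Lemma \ref{lem:x1min}'s rule on $X_1^-$ plus a type-designation on $X_4^{eq}$; lowest-index empty arm on $X_2^n$), but the compatibility step --- which is the real content of Proposition \ref{prop:propb} --- is where your plan has a gap. You propose to glue the rule on $\cl^c$ to the rule on $X_2^{eq}$, and the rule on $X_1^-$ to the rule on $X_4^{eq}$, by a limiting-geodesic argument ``as in Proposition \ref{prop:x12d}'', and you flag as the main outstanding work the claim that a sequence in $\cl^c$ approaching a point of $X_2^{eq}$ has limiting geodesic equal to the clockwise-designated one. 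That is the wrong target: no such sequences exist. A point of $X_2^{eq}\subset\xopp$ has all four of $a_1,a_2,b_1,b_2$ on exactly two \emph{open} arms, at positive distance from the vertex, so any sequence converging to it eventually has all four points on those same two arms and hence eventually leaves $X_3\cup X_4^n$; and $X_2^+\cup X_1^+$ is closed with agreeing orientation. Likewise $X_1^-$ is closed, and every limit point of $X_4^{eq}$ still occupies at least two arms (the $\varepsilon$-condition prevents both $a_1,a_2$, or both $b_1,b_2$, from reaching the vertex, and distinct points cannot change arms without passing it), so $X_1^-$ and $X_4^{eq}$ share no limit points. This separation of closures is the paper's entire compatibility argument for rules (1) and (2); once it is noted there is nothing to check, and no representability analysis ``pinning down which path survives in the limit'' is needed. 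Conversely, if such sequences did exist, your key observation would not be provable in general --- geodesics for points of $X_4^n$ involve no orientation switch at all, so they could not converge to a designated-switch geodesic --- so the limiting-path strategy is not a viable substitute, and as written your compatibility claims for (1) and (2) rest on an argument that does not apply.

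Two smaller points. First, your rule on $X_2^{eq}$ (``reuse Lemma \ref{lem:x2c}'') is ambiguous for $k>3$: there may be several empty arms whose clockwise successor is occupied, so you must fix a tie-break, as the paper does (the smallest index $i$ with arm $i$ empty and arm $i+1$ occupied). Second, your $X_4^{eq}$ rule (particle on the clockwise-minimal occupied arm passes first) is a legitimate variant of the paper's (which observes that $X_4^{eq}$ is a finite disjoint union of open sets indexed by the arm-assignment and lets the first particle pass first on each component); both are continuous because the arm-assignment is locally constant there. For Rule 3, continuity should be argued on all of $X_2^n$, including configurations with a point at the vertex, not only on ``the interior of each stratum''; what saves you is that within $X_2^n$ the pair of occupied arms is itself locally constant, so the lowest-index empty arm, and hence the chosen geodesic, varies continuously.
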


We compare to the situation of the $\Y$-graph.  There, the sets $X_4^{n}$ and $X_4^{eq}$ do not exist, the complement of the total cut locus is $X_3 \cup X_2^+ \cup X_1^+ \cup X_2^{n}$, and we exhibited a GMPR on $X_1^- \cup X_2^{eq}$.  With more than three arms, one cannot define compatible GMPRs on $X_1^-$ and $X_2^{eq}$.  So we instead show that the GMPR on $X_2^{eq}$ is compatible with that on the complement of the total cut locus, that $X_4^{eq}$ admits a GMPR compatible with that on $X_1^-$, and that $X_2^{n}$ admits a GMPR.

\begin{proof}
We first remark that $X_4^{n} \cup X_2^+ \cup X_1^+$ are in the complement of the total cut locus by definition, and $X_3$ is in the complement of the total cut locus by Proposition \ref{prop:x3} and because geodesics do not enter unused arms unless an orientation switch is necessary (this can be formalized in the same manner as Lemma \ref{lem:rep}).

\emph{Rule 1. \ } As the complement of the total cut locus, there is a GMPR on $X_4^{n} \cup X_3 \cup X_2^+ \cup X_1^+$. 
The GMPR on $X_2^{eq}$ can be defined in the case of $k > 3$ arms: designate arm $i$ for the orientation switch, where $i$ is the smallest index such that arm $i$ is empty but arm $i+1$ is occupied, and use the particle beginning on arm $i+1$ for the switch.

To show that the GMPR on $X_2^{eq}$ is compatible with that of the complement of the total cut locus, we can show that the closures of the sets are disjoint.

The closure of $X_2^{eq}$ is contained in $X_2^{eq} \cup X_1^-$.  If a limit point $(a,b)$ of $X_4^{n} \cup X_3$ uses only two arms, then one of $a_i$ or $b_i$ must lie at the vertex, hence $(a,b) \notin X_2^{eq}$.  Finally, $X_2^+ \cup X_1^+$ is closed.

\emph{Rule 2. \ } To motion plan on $X_4^{eq}$, observe that $X_4^{eq}$ is a union of finitely many disjoint open sets, defined by indicating which arms contain which particles.  It is enough to motion plan on one of these sets, and the GMPR may be defined simply by always letting the first particle through the vertex first.

The GMPR on $X_1^-$ is defined in Lemma \ref{lem:x1min}; the same argument holds verbatim for star graphs with $k > 3$ arms.

Now $X_1^-$ is closed, and limit points $(a,b)$ of $X_4^{eq}$ occupy at least two arms, so $(a,b) \notin X_1^-$.  Thus the GMPRs are compatible on the union.

\emph{Rule 3. \ } Finally, for points $(a,b) \in X_2^{n}$, the particle which must move first for the orientation switch is pre-determined, and so we only must choose which arm is used for the switch.  We define the GMPR to use the empty arm of lowest index.
\end{proof}

With the $\ell_2$ analysis complete, we now turn our attention to the $\ell_1$ metric on $\F_\varepsilon$.

\subsection{The $\ell_1$ metric on $\F_\varepsilon$ and the proof of Theorem \ref{thm:gcl2}}
\label{sec:l1}
The $\ell_2$-case of Theorem \ref{thm:gcl2} follows from Propositions \ref{prop:x12d} and \ref{prop:propb}, and so it remains to consider the $\ell_1$-case.  To distinguish between geodesics in various metrics, we will use the terminology ``$\ell_i$-geodesic'' to refer to a geodesic in $(\F_\varepsilon, \ell_i)$.

\begin{remark}  Because the $\ell_1$ and $\ell_2$ metrics induce the same topology on $\F_\varepsilon$, the induced compact-open topologies on the path space $P\F_\varepsilon$ are equivalent.  In particular, if $E \subset \F_\varepsilon \times \F_\varepsilon$, and if $s : E \to P\F_\varepsilon$, then $s$ is continuous in the $\ell_1$-induced compact-open topology on $P\F_\varepsilon$ if and only if $s$ is continuous in the $\ell_2$-induced compact-open topology on $P\F_\varepsilon$.  Thus it is not necessary to distinguish notions of continuity when considering geodesics in different metrics.
\end{remark}

To adapt the $\ell_2$ argument to $\ell_1$, the general idea is as follows: we will use essentially the same partition as in the $\ell_2$ case, apart from small modifications to the decompositions of $X_2$ and $X_4$.  In particular, the sets $X_2^{eq}$, $X_2^n$, $X_4^{eq}$, and $X_4^n$, which were previously defined in terms of the $\ell_2$ metric, will be redefined in terms of the $\ell_1$ metric.  The actual GMPRs, which by definition map a point $(a,b)$ to an $\ell_1$-geodesic from $a$ to $b$, will always follow \emph{local} $\ell_2$-geodesics; that is, paths which locally, but perhaps not globally, minimize $\ell_2$-distance.

In particular, we recall that most points $(a,b) \in \F_\varepsilon$ lie in the $\ell_1$ total cut locus: unless one particle stays fixed throughout a minimizing trajectory, one can perturb any $\ell_1$-length-minimizing path, by changing the speed of one of the particles, without changing the $\ell_1$-length.  Even pausing one particle does not penalize the $\ell_1$-length; as long as a motion from $a$ to $b$ does not involve unnecessary backtracking, the motion corresponds to an $\ell_1$-geodesic.  It follows that if an $\ell_2$-geodesic $\gamma$ follows a direct path from $a$ to $b$, as is the case when $(a,b) \in X_1^+ \cup X_2^+$, then $\gamma$ is also an $\ell_1$-geodesic.  More generally, we have the following.

\begin{lem}
\label{lem:l2l1} Let $(a,b) \in X_1 \cup X_2^+ \cup X_3 \cup (X_2^- - \xopp)$ and let $\gamma$ be an $\ell_2$-geodesic from $a$ to $b$.  Then $\gamma$ is an $\ell_1$-geodesic.
\end{lem}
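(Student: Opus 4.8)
The plan is to show that for $(a,b)$ in the listed cases, the $\ell_2$-geodesic $\gamma$ has the property that neither particle backtracks along any arm, and that this ``no backtracking'' condition is sufficient to guarantee $\ell_1$-length-minimality. First I would make precise the notion of backtracking: a path in $\F_\varepsilon$ projects to two paths $\gamma_1,\gamma_2$ in $G$, each of which is a path in a tree; I would say $\gamma_j$ \emph{backtracks} if $\ell(\gamma_j) > d(a_j,b_j)$, i.e.\ if the $j$th particle travels strictly farther than the tree-distance between its endpoints. The key observation is that $\ell_1(\gamma) = \ell(\gamma_1) + \ell(\gamma_2)$ for any path $\gamma$, since the $\ell_1$-length is simply the sum of the lengths traced by the two particles (speed-independence — this is exactly the point emphasized in the earlier remark on the $\ell_1$ metric). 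Hence $\ell_1(\gamma) \geq d(a_1,b_1) + d(a_2,b_2) = \ell_1(a,b)$ always, with equality precisely when neither particle backtracks. So the lemma reduces to the claim: \emph{for $(a,b)$ in the listed cases, the $\ell_2$-geodesic $\gamma$ has no backtracking in either coordinate.}

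Next I would verify this no-backtracking claim case by case, appealing to the representability machinery promised in Section \ref{sec:rep} and already used informally in the proof of Proposition \ref{prop:x3}. For $(a,b) \in X_1 \cup X_2^+$, the $\ell_2$-geodesic is the direct path, so each particle moves monotonically toward its destination and there is manifestly no backtracking. For $(a,b) \in X_3$, Proposition \ref{prop:x3} already establishes that the geodesic is representable and does not enter unused arms; in the representation $(Z,\ell_2) \cong (\R^2,\ell_2)$ the geodesic is a straight segment, whose two coordinate projections are monotone, so again neither particle backtracks within its (at most two) arms — and since the particle never leaves those arms, monotonicity of the coordinate projection is exactly the statement that it travels distance $d(a_j,b_j)$. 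The case $(a,b) \in X_2^- - \xopp$ is handled by the analysis preceding Lemma \ref{lem:x2c}: there a unique geodesic was exhibited (the cases organized by the permutations $a_1b_2b_1a_2$, $a_1b_2a_2b_1$, $a_1a_2b_2b_1$ and the ``three on one arm'' configurations), and in each such configuration one checks from the representation that although an orientation switch occurs, the particle executing the switch still moves monotonically from its start, across the vertex, to its destination — no particle is ever forced to retreat. I would phrase this as: the orientation switch in $X_2^- - \xopp$ is ``free'' in the sense that one particle simply passes straight through while the other waits (possibly moving toward, never away from, its target).

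The main obstacle I anticipate is the $X_2^- - \xopp$ case, specifically making rigorous that no backtracking occurs in the configurations where three of the four points lie on one arm and one lies on the other. In those cases the geodesic does involve a particle crossing the vertex to ``let the other pass,'' and one must rule out that, say, the passing particle overshoots into a third arm and returns, or that the waiting particle is pushed backward by the $\varepsilon$-constraint. The cleanest way to dispose of this is to invoke the displayed non-minimality statement from the proof of Lemma \ref{lem:rep} (a path in which both particles simultaneously occupy an open unused arm is non-minimizing) together with the explicit representations in Figure \ref{fig:threeonone}: in each representation the geodesic segment has monotone coordinate projections, and monotonicity plus staying within the two relevant arms is precisely no backtracking. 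Once that is in hand, combining with $\ell_1(\gamma) = \ell(\gamma_1)+\ell(\gamma_2) = d(a_1,b_1)+d(a_2,b_2)$ finishes the proof. I would also note that $\xopp$ is genuinely excluded here: there the two candidate geodesics (type $1$ and type $2$) may force the passing particle to detour, so one of them can strictly backtrack, and the lemma would fail — this is consistent with $\xopp$ being treated separately in the subsequent $\ell_1$ analysis.
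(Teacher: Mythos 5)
Your reduction to a ``no backtracking'' criterion is where the argument breaks. You assert that $\ell_1(\gamma)\ge d(a_1,b_1)+d(a_2,b_2)$ with equality exactly when neither particle backtracks, and that it therefore suffices to show the $\ell_2$-geodesic never backtracks on the listed sets. The sufficiency direction is fine, but the no-backtracking claim is false on most of those sets, and indeed the intrinsic $\ell_1$-distance in $\F_\varepsilon$ is strictly larger than $d(a_1,b_1)+d(a_2,b_2)$ there, because the $\varepsilon$-constraint forces detours. The lemma covers all of $X_1$, not just $X_1^+$: for $(a,b)\in X_1^-$ all four points lie on one arm with disagreeing orientation, so in every path both particles must leave that arm for the orientation switch and return, and each traverses strictly more than $d(a_j,b_j)$. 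The same happens in the three-points-on-one-arm configurations of $X_2^--\xopp$ (in Figure \ref{fig:threeonone}, left, the circle has both endpoints on the top arm yet must enter the right arm and come back), and even in $X_3$: Figure \ref{fig:exgeodpath} is precisely an $X_3$ pair in which the second particle must move away from its destination. For the same reason your description of the $X_3$ case is wrong: the geodesic in the planar representation is not a straight segment with monotone coordinate projections in general; it bends around the forbidden strip, and one coordinate is non-monotone exactly in these configurations. So ``no backtracking'' is unachievable there, and your equality test compares $\ell_1(\gamma)$ with the wrong quantity (the ambient product distance rather than the intrinsic distance in $\F_\varepsilon$); note the paper's remark speaks of \emph{unnecessary} backtracking for this reason.

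The missing idea, which is the paper's actual argument, is that the forced detour is the same for every path: any path from $a$ to $b$ (in particular any $\ell_1$-minimizing one) must pass through explicit intermediate configurations --- the points $p$ and $q$ of Figure \ref{fig:exgeodpath} in the $X_3$ case, and analogous bottleneck configurations determined by the arm(s) used for the orientation switch in $X_1^-$ and $X_2^--\xopp$. Consequently the $\ell_1$-distance from $a$ to $b$ equals $\ell_1(a,p)+\ell_1(p,q)+\ell_1(q,b)$, each consecutive pair lies in $X_2^+$ where the direct motion is simultaneously the $\ell_2$-geodesic and an $\ell_1$-geodesic, and the $\ell_2$-geodesic from $a$ to $b$ passes through the same points, hence realizes this sum and is an $\ell_1$-geodesic. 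Your argument does correctly handle $X_1^+\cup X_2^+$ and those $X_3$ pairs where no yielding is needed, but it cannot be repaired to cover $X_1^-$, the remaining part of $X_3$, or $X_2^--\xopp$ without replacing the monotonicity claim by a bottleneck decomposition of this kind. (Also, the exclusion of $\xopp$ is not because forced backtracking appears only there --- it appears throughout $X_1^-$ and $X_2^-$ --- but because on $\xopp$ and $X_4$ the $\ell_2$-optimal choice of which particle passes the vertex first may differ from the $\ell_1$-optimal one, as the paper's explicit example with $|a_1|=1$, $|a_2|=2$, $|b_1|=5$, $|b_2|=2$ shows.)
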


\begin{proof}  For points $(a,b) \in X_1^+ \cup X_2^+$, the uniquely-determined $\ell_2$-geodesic $\gamma$ moves $a$ directly to $b$, hence is an $\ell_1$-geodesic.  For points $(a,b) \in X_3$, $\gamma$ is also uniquely determined, and although such geodesics may contain a brief motion which moves a particle farther from its destination, such motions are strictly necessary.  To see this, observe in Figure \ref{fig:exgeodpath} that any $\ell_1$-geodesic must pass through the points $p$ and $q$ depicted, respectively, in the center and right images, and so the length of any $\ell_1$-geodesic from $a$ to $b$ is equal to the sum $\ell_1(a,p) + \ell_1(p,q) + \ell_1(q,b)$.  The $\ell_2$-geodesic $\gamma$ also passes through $p$ and $q$.  Moreover, the three points $(a,p)$, $(p,q)$, and $(q,b)$ all lie in $X_2^+$, so $\gamma$ is the concatenation of the three uniquely-determined $\ell_2$-geodesics connecting these points, each of which is also an $\ell_1$-geodesic.  Thus $\gamma$ also has $\ell_1$-length $\ell_1(a,p) + \ell_1(p,q) + \ell_1(q,b)$ and hence is an $\ell_1$-geodesic.

Similar arguments apply to points $(a,b) \in X_1^- \cup (X_2^- - \xopp)$.  For such $(a,b)$, there is an $\ell_2$-geodesic from $a$ to $b$ corresponding to each choice of arm(s) used for the orientation switch.  However, once the arm choices are fixed, one may again determine point(s) through which any $\ell_1$-geodesic must pass, and similar arguments may be made.
\end{proof}

Lemma \ref{lem:l2l1} establishes that all $\ell_2$-geodesics are $\ell_1$-geodesics, except perhaps $\ell_2$-geodesics from $a$ to $b$ for points $(a,b) \in \xopp \cup X_4$.  In the next example we give explicit points of $X_2^{opp}$ and of $X_4$ such that Lemma \ref{lem:l2l1} fails.  We recall that for $(a,b) \in \xopp \cup X_4$, a path from $a$ to $b$ is \emph{type} $i$ if the $i$th particle passes through the vertex first.  For $(a,b) \in X_2^{opp}$, and for each particle $i$ and empty arm $j$, there is a unique $\ell_2$-length minimizing path among those of type $i$ using arm $j$ for the orientation switch.  For $(a,b) \in X_4$, no orientation switch is necessary, but for each particle $i$, there is a unique type $i$ $\ell_2$-length minimizing path.

\begin{example} Let $G$ be a star graph with $k > 3$ arms with $\varepsilon = 2$.  Consider $(a,b) \in X_4$,  such that $|a_1| = 1$, $|a_2| = 2$, $|b_1| = 2$, and $|b_2| = 5$ (see Figure \ref{fig:x4x2}, left); here $|\cdot|$ represents distance from the vertex.  Let $\gamma_{i,j}$ represent any path which achieves the minimum $\ell_j$-length among those of type $i$.  Let $d_{i,j}$ represent the length of $\gamma_{i,j}$.  Then
\[
d_{1,1} = 10, \hspace{.2in} d_{2,1} = 12, \hspace{.2in} d_{1,2} = 1 + \sqrt{8} + 5 \approx 8.82, \mbox{\ \ and} \hspace{.2in} d_{2,2} = \sqrt5 + \sqrt8+ \sqrt{13} \approx 8.67.
\]
In particular, the unique $\ell_2$-geodesic $\gamma_{2,2}$ is not an $\ell_1$-geodesic, but the path $\gamma_{1,2}$, which is not an $\ell_2$-geodesic but does minimize $\ell_2$-length among type $1$ paths, is an $\ell_1$-geodesic.  A similar phenomenon occurs for $(a,b) \in \xopp$ with $|a_1| = 1$, $|a_2| = 2$, $|b_1| = 5$, and $|b_2| = 2$ (see Figure \ref{fig:x4x2}, right).
\end{example}
\begin{figure}[ht!]
\centerline{
\includegraphics[width=1.7in]{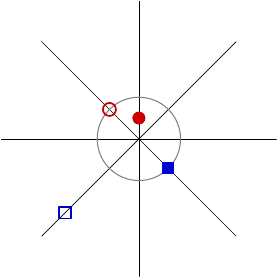} \hspace{.8in} \includegraphics[width=1.7in]{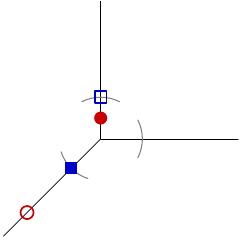}
}
\caption{Elements of $X_4$ (left) and $\xopp$ (right) such that the unique $\ell_2$-geodesic exhibits qualitative behavior distinct from any $\ell_1$-geodesic.}
\label{fig:x4x2}
\end{figure}
We reiterate the intuitive motion planning strategy in the $\ell_1$-case: if $(a,b) \in X_2^{opp} \cup X_4$ has the property that all $\ell_1$-minimizing geodesics are of type $i$, then choose an $\ell_1$-geodesic $\gamma_{i,2}$ (which may not be an $\ell_2$-geodesic).  Now we will formalize these rules, defined on the following sets:
\begin{align*}
(X_2^{eq})_1 & = \left\{ (a,b) \in \xopp \ \big| \ \mbox{ there exist } \ell_1\mbox{-geodesics of types } 1 \mbox{ and } 2 \right\}, \\
(X_4^{eq})_1 & = \left\{ (a,b) \in X_4 \ \  \ \ \big| \ \mbox{ there exist } \ell_1\mbox{-geodesics of types } 1 \mbox{ and } 2 \right\}.
\end{align*}
Note that $(X_2^{eq})_1$ and $(X_4^{eq})_1$ are defined in the same manner as $X_2^{eq}$ and $X_4^{eq}$, with ``$\ell_1$-geodesic'' in place of ``$\ell_2$-geodesic''.  Also analogous to the $\ell_2$ case, we define $(X_2^n)_1 = X_2^- - (X_2^{eq})_1$ and $(X_4^n)_1 = X_4 - (X_4^{eq})_1$.

We define the following GMPRs on these new sets.  We note that, qualitatively, all rules exactly match those in the $\ell_2$ case, as presented in Sections \ref{sec:totalcutlocus} and \ref{sec:stargraphs}; the only difference is that the actual paths are only locally $\ell_2$-length minimizing and may not be $\ell_2$-geodesics.

\begin{lem} There exist GMPRs on the sets $(X_2^{eq})_1$, $(X_2^n)_1$, $(X_4^{eq})_1$, and $(X_4^n)_1$.
\label{lem:gmprl1}
\end{lem}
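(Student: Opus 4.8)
The plan is to establish the four GMPRs by transporting the $\ell_2$ arguments of Sections \ref{sec:totalcutlocus} and \ref{sec:stargraphs} across, using Lemma \ref{lem:l2l1} and the $\ell_1$-flexibility of reparametrization wherever the $\ell_2$-argument used uniqueness of geodesics. The essential point is that although an $\ell_2$-geodesic need not be an $\ell_1$-geodesic on $\xopp\cup X_4$, the path obtained by following the correct \emph{local} $\ell_2$-geodesic (i.e.\ $\gamma_{i,2}$ for the appropriate $i$) \emph{is} an $\ell_1$-geodesic, because any $\ell_1$-geodesic of type $i$ is forced to pass through the same intermediate point $p$ (for $\xopp$) or to make the same arm assignment (for $X_4$), and the three or two sub-paths it decomposes into all lie in $X_2^+\cup X_3$ and hence, by Lemma \ref{lem:l2l1}, are simultaneously $\ell_1$- and $\ell_2$-geodesics; adding their lengths shows $\gamma_{i,2}$ attains the minimal $\ell_1$-length among type $i$ paths, and by definition of $(X_2^{eq})_1$, $(X_2^n)_1$, etc., one of the two types is globally $\ell_1$-minimizing.

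First I would treat $(X_2^n)_1$ and $(X_4^n)_1$, the complement-of-cut-locus pieces. For $(a,b)\in (X_2^n)_1$ there is a unique type $i$ with an $\ell_1$-geodesic; define the GMPR to follow $\gamma_{i,2}$ using the empty arm of lowest index (exactly Rule 3 of Proposition \ref{prop:propb}). Continuity follows because the type and the arm-of-lowest-index are locally constant on $(X_2^n)_1$, and on each such open stratum the map $(a,b)\mapsto\gamma_{i,2}(a,b)$ is a concatenation of the continuous unique-geodesic sections on $\cl^c$ supplied by Proposition \ref{prop:x3} and the proper-metric-space continuity statement from the introduction. The same recipe, letting the first particle through first on each open stratum of $(X_4^n)_1$, handles $(X_4^n)_1$, mirroring Rule 2.

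Next I would handle $(X_2^{eq})_1$ and $(X_4^{eq})_1$ and check compatibility, following Lemmas \ref{lem:x2c} and the proof of Proposition \ref{prop:propb} verbatim: on $(X_2^{eq})_1$ designate arm $i$ for the orientation switch with $i$ the smallest index such that arm $i$ is empty but arm $i+1$ is occupied, and use the particle starting on arm $i+1$; on $(X_4^{eq})_1$, which is a finite disjoint union of open sets indexed by which arms hold which particles, always send the first particle through the vertex first. The limit-point bookkeeping is identical to the $\ell_2$ case — the closure of $(X_2^{eq})_1$ meets $X_1^-$ only where $a_1=b_2$ or $a_2=b_1$ sits at the vertex, and there the limiting path agrees with the $X_1^-$ rule of Lemma \ref{lem:x1min} by the argument of Figure \ref{fig:x2dc}; the closure of $(X_4^{eq})_1$ meets only $X_1^-$-type strata, where again the rules coincide.

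The main obstacle is verifying that these $\ell_1$-redefined sets $(X_2^{eq})_1$, $(X_2^n)_1$, $(X_4^{eq})_1$, $(X_4^n)_1$ are ENRs and that the type/arm data used to define the rules is genuinely locally constant on them — equivalently, that the locus where $d_{1,1}=d_{2,1}$ (the $\ell_1$-analogue of $X_2^{eq}$, $X_4^{eq}$) is a semialgebraic set cut out by piecewise-linear equalities in the four distances-to-vertex coordinates. Since $d_{i,1}$ is, on each combinatorial stratum, an explicit piecewise-linear function of $(|a_1|,|a_2|,|b_1|,|b_2|)$ and the arm data, the sets are semialgebraic, hence ENRs, and the partition into strata on which all relevant data is constant is finite; I would record this explicitly. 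Granting that, continuity of each section reduces, exactly as in the $\ell_2$ proofs, to continuity of the unique-geodesic map on $\cl^c$ together with the fact that concatenation of geodesics through a continuously-varying waypoint is continuous, completing the lemma.
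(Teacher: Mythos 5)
Your proposal follows essentially the same route as the paper: on each of the four sets you select, among the $\ell_1$-geodesics of the relevant type, the locally $\ell_2$-minimizing path ($\gamma_{i,2}$ on the $X_4$-pieces, $\gamma_{i,2,k}$ with the same lowest-index arm conventions on the $X_2$-pieces), and you justify that these are indeed $\ell_1$-geodesics by the same waypoint/concatenation argument that underlies Lemma \ref{lem:l2l1}. The extra material on ENRs, local constancy of the type/arm data, and compatibility with $X_1^-$ and with $\cl^c$ is reasonable but is not part of this lemma in the paper; the compatibility checks belong to the proof of Theorem \ref{thm:gcl2}.

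One point needs correcting: your rule on $(X_4^n)_1$, stated as ``letting the first particle through first on each open stratum, mirroring Rule 2,'' is not right as written. ``First particle through first'' is the rule for $(X_4^{eq})_1$ (the analogue of Rule 2 of Proposition \ref{prop:propb}); on $(X_4^n)_1$ the type is forced and need not be type $1$. Indeed, in the paper's example with $|a_1|=1$, $|a_2|=2$, $|b_1|=2$, $|b_2|=5$ all $\ell_1$-geodesics are of type $1$ even though the $\ell_2$-geodesic is of type $2$, and interchanging the roles of the particles produces points of $(X_4^n)_1$ where only type $2$ paths are $\ell_1$-geodesics, so always sending the first particle through first would fail to produce a section of $\pi$ there. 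The correct rule, which you state for $(X_2^n)_1$ and which your opening paragraph suggests you intended, is: follow $\gamma_{i,2}$ for the unique type $i$ admitting an $\ell_1$-geodesic. With that repair the proposal matches the paper's proof.
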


\begin{proof}  Recall that $\gamma_{i,j}$ represents any path which achieves the minimum $\ell_j$-length among those of type $i$.  For $(a,b) \in X_4$, $\gamma_{i,2}$ is unique.  For $(a,b) \in X_2^-$, we also use the notation $\gamma_{i,2,k}$ to refer to the unique $\ell_2$-length minimizing path from $a$ to $b$ among those type $i$ paths which use leg $k$ for the orientation switch.

On $(X_4^{eq})_1$, there are $\ell_1$-geodesics of type $1$ and $2$; among them we choose $\gamma_{1,2}$.

On $(X_4^n)_1$, there are only $\ell_1$-geodesics of one type $i$; among them we choose $\gamma_{i,2}$.

On $(X_2^{eq})_1$, there are $\ell_1$-geodesics of type $1$ and $2$; among them we choose $\gamma_{i,2,k}$, where $k$ is the smallest index such that arm $k$ is empty but arm $k+1$ is occupied, and $i$ is the particle on arm $k+1$.

On $(X_2^n)_1$, there are only $\ell_1$-geodesics of one type $i$; among them we choose $\gamma_{i,2,k}$, where $k$ is the unused arm of least index.
\end{proof}

When studying compatibility issues in the following proof of Theorem \ref{thm:gcl2}, it is important to note that for $G = \Y$ and $(a,b) \in (X_2^n)_1 - X_2^{opp}$, the path chosen by the above rule is the unique $\ell_2$-geodesic, which is an $\ell_1$-geodesic by Lemma \ref{lem:l2l1}.  Also by Lemma \ref{lem:l2l1}, all $\ell_2$ geodesics on $X_1 \cup X_2^+ \cup X_3$ are $\ell_1$-geodesics, so we define the GMPRs on these sets to use the same geodesics as used in the $\ell_2$ case.

We are now prepared to formalize the proof of Theorem \ref{thm:gcl2} in both the $\ell_1$ and $\ell_2$ cases.

\begin{proof}[Proof of Theorem \ref{thm:gcl2}]
All lower bounds on $\GC$ follow from the known values of $\TC$, together with Lemma \ref{lem:defret}.  The upper bounds for the $\ell_2$ case for parts (a) and (b) follow, respectively, from Propositions \ref{prop:x12d} and \ref{prop:propb}.  In the $\ell_1$ case, it remains to study the compatibility of the GMPRs above.  We emphasize that the rules are qualitatively identical to those in the $\ell_2$ case, so there are only a few compatibility issues to check. \\

(a) Let $G = \Y$.  We claim that there exist two GMPRs, using the same partition as in the $\ell_2$ case, except for the above replacements.  In particular, there are rules on the sets:
\begin{itemize}
\item $X_3 \cup X_2^+ \cup X_1^+ \cup (X_2^n)_1$
\item $X_1^- \cup (X_2^{eq})_1$.
\end{itemize}
The rules on $X_1^-$ and $(X_2^{eq})_1$ have the same qualitative behavior as in the $\ell_2$ case; the verification that these are compatible is the same as in the proof of Proposition \ref{prop:x12d}.

The GMPR on $X_3 \cup X_2^+ \cup X_1^+$ uses the unique $\ell_2$-geodesic and is well-defined by Lemma \ref{lem:l2l1}.  We only must show that the GMPR on $(X_2^n)_1$ is compatible with that on $X_3 \cup X_2^+ \cup X_1^+$.  Note that this was not an issue in the $\ell_2$ case as all sets were in the complement of the $\ell_2$ total cut locus.

To show compatibility, first observe that $X_2^+ \cup X_1^+$ is closed, and limit points of $(X_2^n)_1$ are contained in $(X_2^n)_1 \cup X_1^- \cup (X_2^{eq})_1$.  A limit point $(a,b)$ of $X_3$ may be an element of $(X_2^n)_1$, but it cannot be an element of $X_2^{opp}$ since at least one of $a_1$, $a_2$, $b_1$, or $b_2$ lie at the vertex.  Since the GMPR on $(X_2^n)_1 - X_2^{opp}$ uses the unique $\ell_2$-geodesic from $a$ to $b$ (see the comment below the proof of Lemma \ref{lem:gmprl1}), as does the rule on $X_3$, the rules are compatible. \\

(b) Let $G$ be a star graph with $k > 3$ arms.  We claim that there are rules on the sets:
\begin{itemize}
\item $(X_4^{n})_1 \cup X_3 \cup X_2^+ \cup X_1^+ \cup (X_2^{eq})_1$
\item $X_1^- \cup (X_4^{eq})_1$
\item $(X_2^{n})_1$.
\end{itemize}

Once again, the second and third rules have the same qualitative behavior as in the $\ell_2$ case; the verification that these are compatible is the same as in the proof of Proposition \ref{prop:propb}.

For the first rule, the GMPRs on $(X_4^n)_1$ and $(X_2^{eq})_1$ are defined above, and the GMPR on $X_3 \cup X_2^+ \cup X_1^+$ uses the unique $\ell_2$-geodesic.  Furthermore, $(X_2^{eq})_1$ and $(X_4^n)_1 \cup X_3 \cup X_2^+ \cup X_1^+$ do not share any limit points (analogous to the proof of Proposition \ref{prop:propb}).  Thus we only must show that the GMPR on $(X_4^n)_1$ is compatible with that on $X_3 \cup X_2^+ \cup X_1^+$.  Note that this was not an issue in the $\ell_2$ case as all sets were in the complement of the $\ell_2$ total cut locus.

Suppose that $(a,b) \in X_3 \cup X_2^+$ is a limit point of $(X_4^n)_1$.  We claim that if a point $(a',b') \in (X_4^n)_1$ is sufficiently close to $(a,b)$, then $(a',b')$ is in the complement of the $\ell_2$ total cut locus.  This would establish that the GMPRs at $(a',b')$ and $(a,b)$ both use unique $\ell_2$-geodesics, hence the rules are compatible.

Observe that at least one (and at most two) of $a_1$, $a_2$, $b_1$, and $b_2$ lie at the vertex; we assume $a_1$ is at the vertex, possibly shared with $b_2$.  The GMPR indicates that $(a,b)$ should use the unique $\ell_2$-geodesic $\gamma$ from $a$ to $b$; this geodesic $\gamma$ has the property that the first particle will immediately move down the arm which contains its destination point $b_1$ (there is nothing to be gained by waiting at the vertex nor by moving onto another arm).  Thus $\gamma$ may be considered a type $1$ path, since the first particle moves through the vertex before the second particle.  Any type $2$ path (i.e.\ a path such that the first particle moves onto an empty arm to allow the second particle to go through the vertex ``first'') is strictly longer.  Therefore a sufficiently nearby point $(a',b') \in (X_4^n)_1$ will also have the property that any $\ell_1$- or $\ell_2$-geodesic is type $1$, hence the $\ell_2$-geodesic is unique.
\end{proof}

\subsection{Representations of configurations in $\F_\varepsilon$}
\label{sec:rep}

In previous sections we regularly appealed to intuition regarding the existence and uniqueness of geodesics in various cases.  Here we formalize one statement using the notion of representability; similar arguments may be made for all such statements.

\begin{lem}
\label{lem:rep}
If $(a,b) \in X_3$, then every geodesic from $a$ to $b$ is representable.
\end{lem}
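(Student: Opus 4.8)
The plan is to show that any non-representable path from $a$ to $b$ can be strictly shortened, so a geodesic must be representable. Recall that $(a,b) \in X_3$ means each of the three arms contains at least one of $a_1,a_2,b_1,b_2$, so two of these points share an arm. By the symmetry reductions in the proof of Proposition \ref{prop:x3}, it suffices to treat the three cases in Figures \ref{fig:a1a2}, \ref{fig:a1b2}, \ref{fig:a1b1}. In each case, representability of a point $c = (c_1,c_2)$ means $c_1$ lies in one of two prescribed arms and $c_2$ lies in one of two prescribed arms; a path $\gamma$ fails to be representable precisely when at some time $t$ either $\gamma_1(t)$ enters the ``forbidden'' third arm or $\gamma_2(t)$ does.

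The key step is the displayed shortening statement referenced in the proof of Lemma \ref{lem:x1min}: if during a trajectory both particles simultaneously lie in a single open arm (or, more to the point here, if a particle wanders into an arm occupied by neither its start nor its end point), the path is non-minimizing. I would make this precise as follows. Suppose $\gamma_1$ enters the forbidden arm, say first at time $t_0$ and last leaving it at time $t_1$; then $\gamma_1(t_0)$ and $\gamma_1(t_1)$ both lie at the vertex $v$ (or within the $\varepsilon$-ball), and the first particle's path on $[t_0,t_1]$ contributes positive $d$-length while accomplishing no net displacement of the first particle. Replace $\gamma_1|_{[t_0,t_1]}$ by the constant path at $v$; during this interval the second particle moves freely (it stays at least $\varepsilon$ from $v$ and hence from the stationary first particle, since the original $\gamma$ was admissible and $\gamma_1$ was near $v$), so the modified path is still in $\F_\varepsilon$, still joins $a$ to $b$, and is strictly shorter in $d$-length of the first coordinate, hence strictly shorter in both $\ell_1$ and $\ell_2$. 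The same argument applies if instead $\gamma_2$ enters its forbidden arm. Iterating (or taking the first offending excursion) shows every geodesic avoids both forbidden arms, i.e.\ is representable.

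The main obstacle is handling the interaction of the two particles carefully when one is being ``frozen'' at the vertex: I must check that the collision constraint $d(\gamma_1(t),\gamma_2(t)) \ge \varepsilon$ is preserved by the replacement, and that near the vertex the geometry of the three arms really does force the returning particle back through $v$ (so that the excised piece has no net effect). This is where the smallness of $\varepsilon$ relative to the arm lengths, and the tree structure (the unique geodesic in $G$ between two points on distinct arms passes through $v$), enter. Once this local surgery is justified in one case, the other two cases of $X_3$ are identical after relabeling, and the $\ell_1$ versus $\ell_2$ distinction is irrelevant since we only ever decrease one coordinate's $d$-length while leaving the other coordinate's path unchanged.
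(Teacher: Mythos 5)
There is a genuine gap, and it sits exactly at the point you flag as ``the main obstacle'': the admissibility of the frozen path. Your surgery replaces $\gamma_1|_{[t_0,t_1]}$ by the constant path at the vertex $v$ and asserts that during this interval the second particle stays at least $\varepsilon$ from $v$ ``since the original $\gamma$ was admissible and $\gamma_1$ was near $v$.'' That inference is false: admissibility only gives $d(\gamma_1(t),\gamma_2(t))\ge\varepsilon$, and while $\gamma_1(t)$ is out in the forbidden arm the second particle may approach, or pass straight through, the vertex without violating this. Indeed this is precisely what happens in an orientation switch: the excursion of one particle into an otherwise unused arm exists in order to let the other particle cross the vertex, and freezing the excursing particle at $v$ creates a collision. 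The same objection shows your general principle (``a particle that wanders into an arm containing neither its start nor its end point can be frozen, strictly shortening the path'') cannot be right as stated, because it makes no use of the hypothesis $(a,b)\in X_3$: applied to a configuration in $X_1^-$ it would ``prove'' that the genuine geodesics described in Lemma \ref{lem:x1min}, which do send a particle into an arm it neither starts nor ends on, are non-minimizing.

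This is exactly the case the paper has to treat separately. For an excursion whose endpoints lie on the same axis branch of the representation, a freezing/straightening argument of your type does work and gives a strict shortening. But for the excursion realizing an orientation switch (endpoints on the negative $x$- and negative $y$-branches in the setting of Figure \ref{fig:a1a2}), no local strict shortening with the same endpoints exists: the paper instead replaces it by a \emph{representable} path of \emph{exactly equal} length (swapping the roles of the two arms used for the switch), and then gets strictness globally, from the fact that for $(a,b)\in X_3$ the unique representable geodesic is the straight segment in $(\R^2,\ell_2)$ and involves no orientation switch, hence is strictly shorter than any path that does. Your proposal is missing this equal-length swap and the global comparison; to repair it you would need to either prove that for $X_3$ endpoints an admissible freezing is always possible (it is not, without first ruling out orientation switches) or reproduce the paper's case analysis on where the excursion's endpoints lie in the representation.
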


\begin{proof} We reference Figure \ref{fig:a1a2} in the following argument.

Consider any path $\gamma : [0,1] \to \F_\varepsilon$ from $a$ to $b$, and suppose some portion of $\gamma$ is not representable.  In particular, let $(t_1,t_2) \subset [0,1]$ be some interval such that each $\gamma(t_i)$ is representable, but no $\gamma(t), t \in (t_1,t_2)$, is representable.  Thus at each time $t_i$, one of the two particles is at the vertex of the $\Y$-graph, and so $\gamma(t_i)$ is represented on one of the four branches of the axes in the right image of Figure \ref{fig:a1a2}.  We will first show:  
\[
\mbox{If } \gamma(t_1) \mbox{ and } \gamma(t_2) \mbox{ lie on the same axis branch, then } \gamma \big|_{(t_1,t_2)} \mbox{ is not minimizing}.
\]
Indeed, the geodesic in $\F_\varepsilon$ between two points $\gamma(t_1)$ and $\gamma(t_2)$, whose representations lie on the same axis branch, is representable: the representation is an isometry and the geodesic in the representation is the straight line segment along that axis.  Since $\gamma \big|_{(t_1,t_2)}$ is not representable, it is not minimizing.

Now observe that if $\gamma(t_1)$ is on the positive $y$-axis, then $\gamma(t_2)$ also must be; since if the second particle is on the bottom arm when the first particle enters the bottom axis, it must stay there until the first particle leaves.  The same argument applies to the positive $x$-axis.

The remaining option is that one point (we assume it is $\gamma(t_1)$) is represented on the negative $x$-axis, and $\gamma(t_2)$ is represented on the negative $y$-axis.  In this situation, the path $\gamma \big|_{(t_1,t_2)}$ undergoes an orientation switch.  In this case, the second particle enters the right arm while the first particle is on the top arm, then the first particle enters the bottom arm, then the second particle re-enters the top arm, and the first particle moves back to the vertex.  However, this path is exactly the same length as the corresponding path in which the roles of the bottom and right arms are swapped in the previous sentence.  This latter path \emph{is} representable, so we may redefine $\gamma$ on $(t_1,t_2)$ with this representable path, without changing its length.  In this way, every non-representable path is the same length as some representable path.  However, there is a unique representable geodesic connecting $a$ and $b$, and it does not involve any orientation switches, so it is not the same length as any non-representable path.
\end{proof}

Together with the fact that the representation map $(Z,\ell_2) \to (\R^2,\ell_2)$ is an isometry, Lemma \ref{lem:rep} is used to verify that elements of $X_3$ are not in the total cut locus of $\F_\varepsilon$.

When $(a,b) \notin X_3$, it is possible that an orientation switch is necessary, and it is possible that some non-representable path and its representable replacement are actually geodesics.  This was the case for cut locus points in $X_1^-$ and $X_2^-$.  Nevertheless, in these cases the same argument may be used to show that these geodesics are unique among those of certain types; e.g., among geodesics of type $i$ using arm $k$ for an orientation switch.


\section{The proof of Theorem \ref{GCthmun}: Geodesic motion planning on $(\C, \ell_1)$}
\label{unsec}

We now turn our attention to the proof of Theorem \ref{GCthmun}.  In case $G$ is homeomorphic to an interval, a single geodesic motion planning rule (GMPR) may be defined explicitly on $\C$.  This establishes part (a) of Theorem \ref{GCthmun}.  For the remainder of this section, $G$ refers to a tree which is not homeomorphic to an interval.

The main accomplishment is to partition $\C \times \C$ into three ENRs (two for the $\Y$ graph), on each of which there is a continuous choice of a geodesic from the initial configuration to the ending configuration.

We begin with a definition and elementary proposition.

\begin{definition} Let $Q$ be a subset of a tree $G$.  The \emph{convex hull} $S(Q)$ of $Q$ is the minimal connected subtree of $G$ containing $Q$; equivalently, it is the union of (the images of) all minimizing geodesics connecting points of $Q$.
\end{definition}

If $x_1,x_2,x_3$ are any three points on a tree $G$, then the image of the geodesic path from $x_2$ to $x_3$ is contained in the union of the paths from $x_1$ to $x_2$ and $x_1$ to $x_3$.  Therefore $S(Q)$ may equivalently be defined as the union of the images of all geodesic paths from one fixed point of $Q$ to all other points of $Q$.  When $Q$ is a set of four points on $G$, $S(Q)$ is the union of three paths emanating from a single point, and so we have the following:

\begin{prop}\label{types} If $Q$ is a set of four points on a tree $G$, then $S(Q)$ must be one of the following five types, pictured in Figure \ref{fig1}.
\begin{itemize}
\item[$Y_1$.] One point of $Q$ is at a vertex, from which paths to the other three are disjoint.
\item[$Y_2$.] There is a vertex from which there are disjoint paths to three of the points of $Q$, and the fourth point of $Q$ lies in the interior of one of these paths.
\item[$X$.] There is a vertex from which paths to all four points are disjoint.
\item[$H$.] There are two vertices, from each of which there are disjoint paths to two points of $Q$, and these four paths are also disjoint from the path between the two vertices.
\item[$I$.] Two points of $Q$ lie inside the path connecting the other two.
\end{itemize}
\end{prop}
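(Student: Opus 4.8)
The plan is to analyze the combinatorial structure of $S(Q)$ directly from the characterization given just before the statement: $S(Q)$ is the union of three geodesic paths emanating from a single fixed point $x_1 \in Q$ to the other three points $x_2, x_3, x_4$. First I would observe that each of these three paths, being a geodesic in a tree, is an embedded arc, and that any two of them agree on an initial segment (starting at $x_1$) and then are disjoint afterward — this is the standard tripod/``median'' structure of trees. So the union of the three arcs is itself a tree with at most two vertices of degree $3$ (branch points), and with the four points of $Q$ distributed among its leaves and interior.

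The key step is a case analysis on how many branch points $S(Q)$ has and where the four points of $Q$ sit relative to them. If there are zero branch points, then all three arcs share their full length with a common arc, forcing $x_2, x_3, x_4$ to lie along a single arc through $x_1$; this gives type $I$ (two points inside the arc joining the other two — note $x_1$ need not be an endpoint, so one must check the endpoints of the spanning arc are among the four points, which they are since the arc is exactly $S(Q)$). If there is exactly one branch point $v$, then from $v$ three disjoint arcs emanate whose far ends are three of the four points of $Q$; the fourth point lies in the interior of one of the three arcs (if it were at $v$ we would have type $Y_1$ with $v \in Q$, and if at an arc-end it would coincide with one of the three already counted). Being at $v$ gives $Y_1$; being interior gives $Y_2$. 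If there are exactly two branch points $v, w$, then the arc from $v$ to $w$ is part of $S(Q)$, and each of $v, w$ must have two further disjoint arcs emanating (to total degree $3$), whose four far ends are exactly the four points of $Q$, and these four arcs plus the $vw$-arc are mutually disjoint except at $v, w$ — this is precisely type $H$. Finally I would note that three or more branch points is impossible: a tree spanned by three arcs from a common point has at most two points of degree $\geq 3$, since each branch point ``consumes'' one of the at most two places where two of the three arcs can separate.

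The main obstacle I expect is bookkeeping rather than genuine difficulty: making sure the case distinctions are exhaustive and that the degenerate coincidences (two of the $x_i$ equal, or an $x_i$ landing exactly on a branch point, or $x_1$ itself being a branch point or a leaf) are correctly absorbed into one of the five named types rather than producing a spurious sixth type. In particular one must be careful that $Q$ is genuinely a set of four \emph{distinct} points — which I will assume, as is implicit in the statement — and that ``vertex'' in the statement of types $Y_1, Y_2, X, H$ refers to a branch point of $S(Q)$, not necessarily a vertex of the ambient graph $G$; with that reading the correspondence is exact. I would organize the write-up as: (i) recall the three-arcs-from-$x_1$ description and the pairwise-initial-agreement property; (ii) define $b$ = number of degree-$\geq 3$ points of $S(Q)$ and prove $b \leq 2$; (iii) handle $b = 0, 1, 2$ in turn, in each sub-case locating the four points of $Q$ and reading off the type; (iv) conclude.
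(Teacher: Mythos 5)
Your overall strategy (reduce to the union of three geodesic arcs emanating from $x_1$, then classify by the branch points of $S(Q)$) is the natural formalization of what the paper merely sketches, but as written your case analysis has a genuine gap: it never produces type $X$. The problem starts with the claim that the union of the three arcs ``is a tree with at most two vertices of degree $3$'' and resurfaces in your $b=1$ case, where you assume that exactly three disjoint arcs emanate from the unique branch point and conclude only $Y_1$ or $Y_2$. But the single branch point can have degree $4$ in $S(Q)$: this happens precisely when the three arcs from $x_1$ all diverge at the same point (equivalently, the three pairwise separation points coincide), e.g.\ when the four points of $Q$ lie on four different edges around a degree-$4$ vertex of $G$. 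That is exactly type $X$ in Figure \ref{fig1}, so your enumeration as stated would ``prove'' that $S(Q)$ is always of type $I$, $Y_1$, $Y_2$, or $H$, which is false.

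The fix is easy and fits your framework. Since every leaf of $S(Q)$ is an endpoint of one of the three arcs, $S(Q)$ has at most four leaves, and summing $(\deg v - 2)$ over branch points gives at most $(\text{number of leaves}) - 2 \le 2$. Hence the possible branch data are: no branch point (type $I$); one branch point of degree $3$ (types $Y_1$ or $Y_2$, split exactly as you do according to whether the fourth point sits at the branch point or in the interior of an arm); one branch point of degree $4$ (all four points of $Q$ at the four leaf ends, type $X$); or two branch points, each necessarily of degree $3$ (type $H$). This counting also replaces your somewhat vague ``each branch point consumes one of the at most two places where two of the three arcs can separate.'' One further remark: your worry about whether ``vertex'' means a vertex of $G$ or a branch point of $S(Q)$ is moot, since a point of degree $\ge 3$ in the subtree $S(Q)\subseteq G$ has degree $\ge 3$ in $G$ and is therefore a vertex of $G$; and note that in the paper's later use the four points need not be distinct, so the degenerate coincidences you set aside do eventually matter, though not for the proposition as literally stated.
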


\begin{figure}[h!]
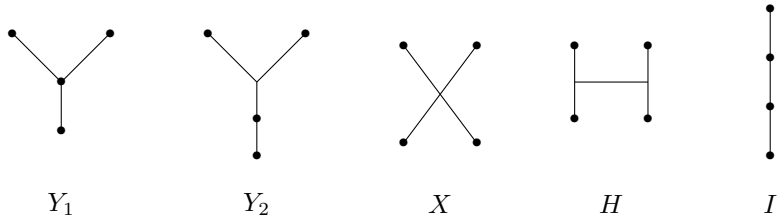

\begin{center}
\begin{\tz}[scale=.65]
\draw (0,0) -- (0,1) -- (-1,2);
\draw (0,1) -- (1,2);
\node at (0,0) {$\ssize\bullet$};
\node at (0,1) {$\ssize\bullet$};
\node at (-1,2) {$\ssize\bullet$};
\node at (1,2) {$\ssize\bullet$};
\draw (4,-.5) -- (4,1) -- (3,2);
\draw (4,1) -- (5,2);
\node at (4,-.5) {$\ssize\bullet$};
\node at (4,.25) {$\ssize\bullet$};
\node at (3,2) {$\ssize\bullet$};
\node at (5,2) {$\ssize\bullet$};
\draw (7,-.25) -- (8.5,1.75);
\draw (8.5,-.25) -- (7,1.75);
\node at (8.5,-.25) {$\ssize\bullet$};
\node at (7,1.75) {$\ssize\bullet$};
\node at (7,-.25) {$\ssize\bullet$};
\node at (8.5,1.75) {$\ssize\bullet$};
\draw (10.5,.25) -- (10.5,1.75);
\draw (12,.25) -- (12,1.75);
\draw (10.5,1) -- (12,1);
\node at (10.5,.25) {$\ssize\bullet$};
\node at (10.5,1.75) {$\ssize\bullet$};
\node at (12,.25) {$\ssize\bullet$};
\node at (12,1.75) {$\ssize\bullet$};
\draw (14.5,-.5) -- (14.5,2.5);
\node at (14.5,-.5) {$\ssize\bullet$};
\node at (14.5,1.5) {$\ssize\bullet$};
\node at (14.5,.5) {$\ssize\bullet$};
\node at (14.5,2.5) {$\ssize\bullet$};
\node at (0,-1.5) {$Y_1$};
\node at (4,-1.5) {$Y_2$};
\node at (7.75,-1.5) {$X$};
\node at (11.25,-1.5) {$H$};
\node at (14.5,-1.5) {$I$};
\end{\tz}
\end{center}
\caption{Types of $S(Q)$}
\label{fig1}
\end{figure}

We number the degree-1 vertices of $G$ from $1$ to $k$ in clockwise order around the tree. In Figure \ref{fig2}, we depict a tree with numbered vertices, and a selection of four points on it of each of our five types.

\begin{figure}[h!]
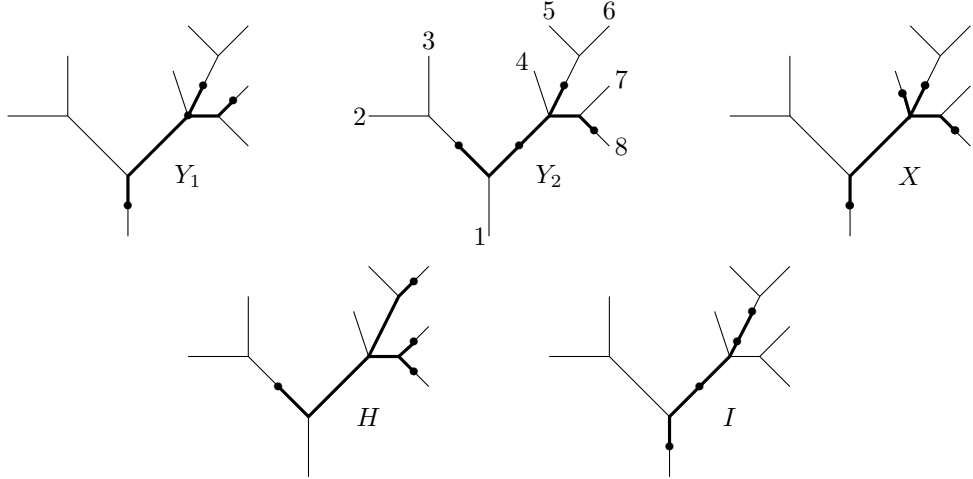

\begin{center}
\begin{\tz}[scale=.4]
\draw (0,0) -- (0,2) -- (-2,4) -- (-4,4);
\draw[very thick] (0,1) -- (0,2) -- (2,4) -- (2.5,5);
\draw[very thick] (2,4) -- (3,4) -- (3.5,4.5);
\draw (-2,4) -- (-2,6);
\draw (0,2) -- (2,4) -- (3,4) -- (4,3);
\draw (3,4) -- (4,5);
\draw (2,4) -- (1.5,5.5);
\draw (2,4) -- (3,6) -- (4,7);
\draw (3,6) -- (2,7);
\draw (12,0) -- (12,2) -- (10,4) -- (8,4);
\draw (10,4) -- (10,6);
\draw (12,2) -- (14,4) -- (15,4) -- (16,3);
\draw (15,4) -- (16,5);
\draw (14,4) -- (13.5,5.5);
\draw (14,4) -- (15,6) -- (16,7);
\draw (15,6) -- (14,7);
\node at (11,3) {$\ssize\bullet$};
\node at (13,3) {$\ssize\bullet$};
\node at (14.5,5) {$\ssize\bullet$};
\node at (15.5, 3.5) {$\ssize\bullet$};
\draw[very thick] (11,3) -- (12,2) -- (14,4) -- (15,4) -- (15.5, 3.5);
\draw[very thick] (14,4) -- (14.5,5);
\draw (24,0) -- (24,2) -- (22,4) -- (20,4);
\draw (22,4) -- (22,6);
\draw (24,2) -- (26,4) -- (27,4) -- (28,3);
\draw (27,4) -- (28,5);
\draw (26,4) -- (25.5,5.5);
\draw (26,4) -- (27,6) -- (28,7);
\draw (27,6) -- (26,7);
\draw[very thick] (24,1) -- (24,2) -- (26,4) -- (27,4) -- (27.5,3.5);
\draw[very thick] (25.8,4.7) -- (26,4) -- (26.5,5);
\node at (24,1) {$\ssize\bullet$};
\node at (25.75,4.75) {$\ssize\bullet$};
\node at (26.5,5) {$\ssize\bullet$};
\node at (27.5,3.5) {$\ssize\bullet$};
\node at (0,1) {$\ssize\bullet$};
\node at (2,4) {$\ssize\bullet$};
\node at (2.5,5) {$\ssize\bullet$};
\node at (3.5,4.5) {$\ssize\bullet$};
\draw (6,-8) -- (6,-6) -- (4,-4) -- (2,-4);
\draw (4,-4) -- (4,-2);
\draw (6,-6) -- (8,-4) -- (9,-4) -- (10,-5);
\draw (9,-4) -- (10,-3);
\draw (8,-4) -- (7.5,-2.5);
\draw (8,-4) -- (9,-2) -- (10,-1);
\draw (9,-2) -- (8,-1);
\draw[very thick] (5,-5) -- (6,-6) --(8,-4) -- (9,-4) -- (9.5,-3.55);
\draw[very thick] (9,-4) -- (9.5,-4.5);
\draw[very thick] (8,-4) -- (9,-2) -- (9.5,-1.5);
\node at (5,-5) {$\ssize\bullet$};
\node at (9.5, -1.5) {$\ssize\bullet$};
\node at (9.5,-4.5) {$\ssize\bullet$};
\node at (9.5,-3.5) {$\ssize\bullet$};
\draw (18,-8) -- (18,-6) -- (16,-4) -- (14,-4);
\draw (16,-4) -- (16,-2);
\draw (18,-6) -- (20,-4) -- (21,-4) -- (22,-5);
\draw (21,-4) -- (22,-3);
\draw (20,-4) -- (19.5,-2.5);
\draw (20,-4) -- (21,-2) -- (22,-1);
\draw (21,-2) -- (20,-1);
\draw[very thick] (18,-7) -- (18,-6) -- (20,-4) -- (20.78,-2.5);
\node at (2,2) {$Y_1$};
\node at (14,2) {$Y_2$};
\node at (26,2) {$X$};
\node at (8,-6) {$H$};
\node at (20,-6) {$I$};
\node at (24,1) {$\ssize\bullet$};
\node at (25.75,4.75) {$\ssize\bullet$};
\node at (26.5,5) {$\ssize\bullet$};
\node at (27.5,3.5) {$\ssize\bullet$};
\node at (18,-7) {$\ssize\bullet$};
\node at (19,-5) {$\ssize\bullet$};
\node at (20.25,-3.5) {$\ssize\bullet$};
\node at (20.75,-2.5) {$\ssize\bullet$};
\node at (11.7,0) {$1$};
\node at (7.7,4) {$2$};
\node at (10,6.5) {$3$};
\node at (13.1,5.8) {$4$};
\node at (14,7.5) {$5$};
\node at (16,7.5) {$6$};
\node at (16.4,5.2) {$7$};
\node at (16.4,3) {$8$};
\end{\tz}
\end{center}
\caption{Various $S(Q)$ on a graph.}
\label{fig2}
\end{figure}

\bigskip

If $e$ is an edge emanating from a vertex $v$, let $\eb$ denote the path component of $G-\{v\}$ containing $e$. For example, if $v$ is the degree-4 vertex in the graph in Figure \ref{fig2}, and $e$ is the edge going down and to the left from it, then $\eb$ contains all points between $v$ and the degree-1 vertices labeled 1, 2, and 3. Another example  is when $v$ is the vertex where arms 5 and 6 meet, and $e$ is the edge coming down from it; in this case $\eb$ contains all points between $v$ and the degree-1 vertices labeled 1, 2, 3, 4, 7, and 8. We assign to the edge or to the point on this edge the smallest of these arm numbers. Note that this depends on $v$ as well as $e$.
As an example, if $Q$ consists of points on the arms going out to 5 and 6, a point at the vertex where those arms meet, and a point on the arm going out to 7 and 8, then the arm number for the latter point is 1, not 7.

We will be considering moving from $\{a_1,a_2\}$ to $\{b_1,b_2\}$, with $Q=\{a_1,a_2,b_1,b_2\}$. We will depict $a_1$ and $a_2$ by black dots (B), and $b_1$ and $b_2$ by white (open) dots (W). It is possible that $b_1$ or $b_2$ might coincide with $a_1$ or $a_2$, and so we also consider the possibility that in $Y_2$  the two dots on an edge coincide (and have different colors), and similarly for two adjacent dots on an $I$ diagram.

We subdivide the $I$ diagrams into three types:
\begin{itemize}
\item[$I_1$.] There are one or more vertices of the graph in the interior of the $I$, the endpoint with the smaller arm number contains a white dot, and the endpoint with the larger arm number contains a black dot.
\item[$I_2$.]    There are one or more vertices of the graph in the interior of the $I$, the endpoint with the smaller arm number contains a black dot, and the endpoint with the larger arm number contains a white dot.
\item[$I_3$.] All other $I$ diagrams, so those containing no vertices in the interior and those that do not contain oppositely-colored dots at their endpoints.
\end{itemize}

\begin{prop}\label{3setsun} Let $G$ be a tree.
Partition $\C \times \C$ into three sets as follows. We use the dot color conventions as described above.
\begin{itemize}
\item[$E_1$.] $S(Q)$ is of type $I_1$ or of type $Y_1$ or $Y_2$ such that the arm with smallest arm number contains a black dot,  and, for $Y_2$, the arm with two dots on it contains dots of the same color.
\item[$E_2$.] $S(Q)$ is of type $I_2$ or of type $Y_1$ or $Y_2$ such that the arm with smallest arm number contains a white dot,  and, for $Y_2$, the arm with two dots on it contains dots of the same color.
\item[$E_3$.] Everything else. Thus $S(Q)$ is of type $X$, $H$,  $I_3$, or $Y_2$ such that the arm with two dots on it contains dots of each color.
\end{itemize}

 There is a GMPR on each of $E_1$, $E_2$, and $E_3$.
 \end{prop}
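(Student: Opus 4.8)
The plan is to construct each GMPR explicitly by prescribing, for a configuration type, a canonical geodesic motion, and then to verify continuity across the (closed) pieces into which each $E_j$ naturally subdivides. Recall from the discussion preceding Proposition~\ref{types} that $S(Q)$ is always a union of three arcs meeting at a point, so in all cases the relevant part of $G$ is a small star, and an $\ell_1$-geodesic from $\{a_1,a_2\}$ to $\{b_1,b_2\}$ is obtained as soon as we prescribe which matching ($a_i\mapsto b_i$ or $a_i\mapsto b_{i+1}$) is used and, when the two particles' direct paths would force a collision, which particle waits (or retreats onto a free arm) while the other passes. Because speed is irrelevant in the $\ell_1$ metric (as emphasized in the Introduction), the \emph{only} data a GMPR must choose continuously are this matching and this ``who yields'' decision, together with the free arm used for any necessary detour; once those are fixed the $\ell_1$-geodesic is determined and varies continuously with $(a,b)$.

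First I would treat $E_3$, which should be the easiest: for types $X$, $H$, and $I_3$ the two particles can be moved to their destinations without the paths from $a_i$ to $b_i$ overlapping on any arc in a way that forces a detour — for $X$ and $H$ the convex hulls of $\{a_i\}$-to-$\{b_i\}$ paths are disjoint for the correct matching, and for $I_3$ (no interior vertex, or no oppositely-colored endpoints) one particle can simply move out of the way inside the same interval. For the $Y_2$-subcase of $E_3$ (two dots of different colors on one arm) the particle on that arm moves first, clearing the arm, and the motion is again forced. In each sub-case I would name the geodesic motion, check it is length-minimizing (using that no arc is traversed ``unnecessarily''), and check that the finitely many locally-closed strata of $E_3$ — indexed by which arms hold which points, and by degeneracies such as $a_i=b_j$ — are glued compatibly, i.e.\ the prescriptions agree on common boundary configurations. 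For $E_1$ (types $I_1$, $Y_1$, $Y_2$-same-color with the lowest-numbered arm black) and its mirror $E_2$, I would use the arm-numbering convention introduced just before Proposition~\ref{3setsun}: the particle on (or destined for) the arm of smallest number is given priority to pass through the vertex first, and any detour uses the empty arm of smallest number. This is a single global rule on each of $E_1$, $E_2$, so the only thing to check is continuity at the interfaces between strata of $E_1$ (resp.\ $E_2$) — in particular at configurations where a point reaches a vertex and the type degenerates (e.g.\ $I_1$ limiting to $Y_1$ or $Y_2$), exactly as in the compatibility arguments in the proofs of Proposition~\ref{prop:x12d} and Proposition~\ref{prop:propb}.

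The main obstacle, as in the $\F_\varepsilon$ arguments, will be the boundary-compatibility bookkeeping: showing that the rule on a given $E_j$, pieced together from prescriptions on the individual strata ($X$, $H$, the three flavors of $I$, and the two flavors of $Y_1,Y_2$, further refined by which arms are occupied), actually yields a \emph{continuous} section over all of $E_j$, including at the many degenerate configurations where points collide or sit at vertices and where the type of $S(Q)$ jumps. The key technical point making this tractable is that the arm-numbering is constant in a neighborhood of any such limit configuration, so the ``who yields / which free arm'' choice does not jump; I expect to handle each interface by the same device used earlier — exhibiting, near a limit point, that the prescribed nearby geodesics converge (in the compact-open topology) to the prescribed limit geodesic — rather than by any new idea. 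Finally I would note that $E_1$, $E_2$, $E_3$ are each ENRs (semialgebraic subsets of $\C\times\C$ cut out by the distance-to-vertex coordinates), completing the proof.
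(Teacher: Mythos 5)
Your overall strategy (prescribe a canonical explicit geodesic on each diagram type, then check agreement at the degenerations where the type of $S(Q)$ jumps) is the same as the paper's, but as written the proposal has genuine gaps at exactly the points that carry the mathematical content. First, your central reduction is false: in $(\C,\ell_1)$ a geodesic is \emph{not} determined once you fix the matching and a ``who yields'' decision. A GMPR is a map into the space of parametrized paths with the compact-open topology, so the rule must also pick the timing, and continuity of the section depends on that choice. This is precisely what the paper's proof spends its effort on: uniform motions parametrized proportionally to arc length, performed either simultaneously or strictly sequentially with a specified order (e.g.\ for $I$ diagrams of form BBWW/BWBW, ``rightmost B to rightmost W first''), chosen so that when a $Y_1$/$Y_2$ diagram in $E_1$ or $E_2$ degenerates to an $I$ diagram, either the limiting motion is literally the motion prescribed for that $I$ diagram, or the limiting $I$ diagram lies in a different $E_i$. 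Your justification for skipping this check --- that ``the arm-numbering is constant in a neighborhood of any such limit configuration, so the choice does not jump'' --- is not correct and cannot substitute for it: arm numbers are assigned relative to the vertex of $S(Q)$, and the relevant vertex and diagram type change in the limit; the paper's compatibility argument consists exactly of tracking which endpoint colors and arm numbers the limiting $I$ diagram acquires (e.g.\ observing that in the Figure~\ref{fig3} diagrams the motion between the outside dots increases the arm number, matching the $I_2$ prescription, so the problematic limits fall outside $E_1$).

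Second, two of your structural claims about the geodesics themselves are wrong and would derail the construction. In the unordered space with the $\ell_1$ metric no particle ever ``retreats onto a free arm'': choosing the non-crossing matching (which always exists on a tree) and sequencing the motions already yields a length-minimizing path, and any detour strictly increases $\ell_1$-length, so a rule that invokes a ``free arm of smallest number'' is importing machinery from the ordered space $\F_\varepsilon$ that produces non-geodesics here. Conversely, for $X$ diagrams (and the analogous $H$ diagrams) the motion is \emph{not} ``forced'': both matchings of black to white dots have the same total $\ell_1$-length, so a continuous selection must be made --- the paper's rule is to move first, and uniformly, from the black dot of smallest arm number to the white dot of smallest arm number --- and one must then check that the limits of this rule as dots reach the vertex land in $E_1\cup E_2$ rather than in $E_3$. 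Until you specify the actual parametrized paths on each stratum and carry out the limit comparisons diagram by diagram, the proposal is an outline of the paper's proof rather than a proof.
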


 \begin{remark} {\rm This implies that $\GC(\C, \ell_1)\le 2$, and hence implies Theorem \ref{GCthmun} when $G$ is not the $\Y$ graph, since $\TC(\C)\ge 2$ when $G\ne \Y$.}\end{remark}

 \begin{proof} [Proof of Proposition \ref{3setsun}] Figures \ref{fig3} and \ref{fig4} depict the $Y_2$ cases in $E_1$ and $E_2$, respectively. By placing the dot in the inside of an arm at the vertex, we obtain the $Y_1$ cases of $E_1$ and $E_2$, respectively. The  numbers 1, 2, and 3 indicate the relative order (smallest to largest) of arm numbers associated to the three edges emanating from the vertex $v$. The path that we select is the one that first does uniform motion from the black dot to the white dot indicated by the arrows in the diagram. It is followed immediately by uniform motion from the other black dot to the other white dot. Our uniform motions are always parametrized proportionally to arc length. One can see from the diagrams that collision is avoided. The analogous motion is performed for the associated $Y_1$ diagram. These geodesic paths clearly vary continuously with $(\{a_1,a_2\},\{b_1,b_2\})$.

\begin{figure}
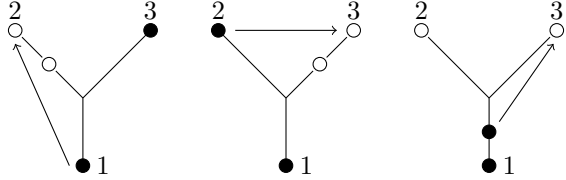

\begin{center}
\begin{\tz}[scale=.45]
\draw (0,-2) -- (0,0) -- (2,2);
\filldraw (0,-2) circle (.2);
\filldraw (2,2) circle (.2);
\draw (0,0) -- (-.8,.8);
\draw (-1.2,1.2) -- (-1.8,1.8);
\draw (-1,1) circle [radius=.2];
\draw (-2,2) circle [radius=.2];
\draw [->] (-.4,-2) -- (-2,1.6);
\node at (.6,-2) {$1$};
\node at (-2,2.6) {$2$};
\node at (2,2.6) {$3$};
\draw (6,-2) -- (6,0);
\draw (6,0) -- (6.8,.8);
\draw (7.2,1.2) -- (7.8,1.8);
\draw (6,0) -- (4,2);
\filldraw (6,-2) circle (.2);
\filldraw (4,2) circle (.2);
\draw (7,1) circle [radius=.2];
\draw (8,2) circle [radius=.2];
\draw [->] (4.5,2) -- (7.5,2);
\node at (6.6,-2) {$1$};
\node at (4,2.6) {$2$};
\node at (8,2.6) {$3$};
\node at (12.6,-2) {$1$};
\node at (10,2.6) {$2$};
\node at (14,2.6) {$3$};
\draw (12,-2) -- (12,0) -- (13.8,1.8);
\draw (12,0) -- (10.2,1.8);
\filldraw (12,-2) circle (.2);
\filldraw (12,-1) circle (.2);
\draw (10,2) circle [radius=.2];
\draw (14,2) circle [radius=.2];
\draw [->] (12.3,-.7) -- (13.9,1.6);
\end{\tz}
\end{center}
\caption{$Y_2$ graphs in $E_1$.}
\label{fig3}
\end{figure}

\begin{figure}
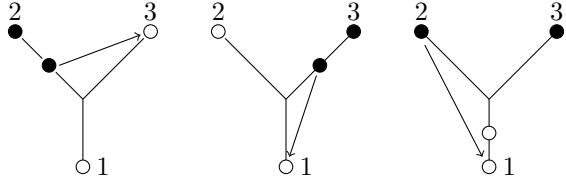

\begin{center}
\begin{\tz}[scale=.45]
\draw (0,-1.8) -- (0,0) -- (1.8,1.8);
\draw (0,0) -- (-.8,.8);
\draw (-1.2,1.2) -- (-1.8,1.8);
\filldraw (-1,1) circle (.2);
\filldraw (-2,2) circle (.2);
\draw (0,-2) circle [radius=.2];
\draw (2,2) circle [radius=.2];
\draw [->] (-.7,1) -- (1.7,1.9);
\node at (.6,-2) {$1$};
\node at (-2,2.6) {$2$};
\node at (2,2.6) {$3$};
\node at (6.6,-2) {$1$};
\node at (4,2.6) {$2$};
\node at (8,2.6) {$3$};
\node at (12.6,-2) {$1$};
\node at (10,2.6) {$2$};
\node at (14,2.6) {$3$};
\draw (6,-1.8) -- (6,0) -- (4.2,1.8);
\draw (6,0) -- (8,2);
\filldraw (7,1) circle (.2);
\filldraw (8,2) circle (.2);
\draw (6,-2) circle [radius=.2];
\draw (4,2) circle [radius=.2];
\draw [->] (6.9,.7) -- (6.1,-1.7);
\draw (12,-1.8) -- (12, -1.2);
\draw (12,-.8) -- (12,0) -- (14,2);
\draw (12,0) -- (10,2);
\filldraw (10,2) circle (.2);
\filldraw (14,2) circle (.2);
\draw (12,-2) circle [radius=.2];
\draw (12,-1) circle [radius=.2];
\draw [->] (10.1,1.6) -- (11.8,-1.7);
\end{\tz}
\end{center}
\caption{$Y_2$ graphs in $E_2$.}
\label{fig4}
\end{figure}

After possibly reorienting, the $I_1$ and $I_2$ diagrams are in the order either BBWW or BWBW, with adjacent B and W possibly at the same position. We choose the path which first moves from the rightmost B uniformly to the rightmost W, followed immediately by uniform motion from the other B to the other W.

For each of the six diagrams of Figures \ref{fig3} and \ref{fig4}, there are two ways that a white dot and black dot could both approach the vertex, having an $I$ diagram of BBWW form as the limiting diagram. In every case, one of those will have as the limiting motion the geodesic just described for the BBWW $I$ diagram, while the other
 will not be in the same $E_i$ set as  the $Y$ diagram that approached it, and so we do not have to worry about compatibility.
 For example, in the first diagram of Figure \ref{fig3}, if the inner  white dot and dot 1 approach the vertex, the limiting $I$ diagram has motion of the first type just described, while if the inner white dot and dot 3 approach the vertex, the limiting $I$ diagram is of the second type because the endpoint with the smaller number has a black dot, as is the case for $I_2$ diagrams. The thing that makes this work is that the Figure \ref{fig3} diagrams have
 the motion between outside dots increasing the arm number ($1\to2$, $2\to 3$, and $1\to 2$, respectively), which corresponds to the motion between outside dots of $I_2$ diagrams. A similar, reversed discussion holds for Figure \ref{fig4} diagrams.

This completes the proof that we have a GMPR on $E_1$ and $E_2$. The argument for $E_3$ which follows is rather similar.

For the $I_3$ diagrams of the form BWWB and WBBW, we use simultaneous uniform motion from the B to the adjacent W. For those of the form BBWW and BWBW (which are all in a single edge), we move from the rightmost B first.

For $X$ diagrams, we first move uniformly  from the black dot with smallest arm number to the white dot with smallest arm number, and then uniformly from the other black dot to the other white dot. As one or two (differently colored) dots approach the vertex, a $Y_1$ or $I_1$ or $I_2$ diagram is obtained. Since these are not in $E_3$, we need not worry about compatibility. A similar rule is used for an $H$ diagram in which the arms emanating from each of the vertices $v_0$ and $v_1$ contain dots of the same color; we first move uniformly from the black dot
with smallest arm number to the white dot with smallest arm number, and then uniformly between the other two dots. The limit as one or two (differently colored) dots approach a vertex is either a $Y_2$ diagram in $E_1$ or $E_2$ or an $I_1$ or $I_2$ diagram, and so again compatibility is not an issue.

For the $Y_2$ diagrams in $E_3$, we can use simultaneous uniform motion between the two dots on one arm, and between the dots on the other two arms, always from black to white, or course. There is no risk of collision. If one of the dots which are unaccompanied on their arm approaches the vertex, we obtain either an $I_3$ diagram with compatible simultaneous motion or an $I_1$ or $I_2$ diagram. If one or both of the dots on the doubly-occupied arm approach the vertex, the limiting diagram is not in $E_3$, so compatibility is not an issue.
Similarly for $H$ diagrams with dots of different colors on each of the two arms emanating from each vertex, move simultaneously and uniformly between the dots on the two arms emanating from each vertex. The limit as one or two of the dots approach their vertex is either a $Y_2$ or $I$ diagram in $E_3$ with compatible motion or else an $I$ diagram not in $E_3$.

This completes the GMPR on $E_3$ and thus completes the proof of the theorem.
\end{proof}

The following result implies Theorem \ref{GCthmun} when $G$ is the $\Y$ graph, since $\TC(\C)\ge1$.
\begin{prop}\label{Yun} If $G$ is the $\Y$ graph, then $\C \times \C$ can be partitioned into two ENRs with a GMPR on each.\end{prop}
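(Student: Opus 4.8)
The plan is to specialize the strategy of Proposition \ref{3setsun} to the $\Y$ graph, taking advantage of the fact that with only three arms the types $X$ and $H$ never occur, and every configuration $Q$ of four points has convex hull of type $Y_1$, $Y_2$, or $I$. I would begin by classifying $\C \times \C$ by the type of $S(Q)$. For the $I$ diagrams (all four points on a single arm, with the vertex possibly among the interior vertices of the $I$), I would use the same subdivision into $I_1$, $I_2$, $I_3$ as before and the same GMPRs: for $I_3$ of form BWWB or WBBW, simultaneous uniform motion from each B to the adjacent W; for any $I$ of BBWW or BWBW form, move the rightmost B to the rightmost W first, then the other B to the other W. For the $Y_2$ (and degenerate $Y_1$) diagrams in which the doubly-occupied arm has dots of the same color, I would use the arrow-paths of Figures \ref{fig3} and \ref{fig4}: first uniform motion along the indicated B-to-W path, then uniform motion between the remaining two dots, which avoids collision exactly as in the proof of Proposition \ref{3setsun}. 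For the $Y_2$ diagrams with oppositely-colored dots on the doubly-occupied arm, I would use simultaneous uniform motion between the two dots on that arm and between the two dots on the remaining two arms, which is collision-free.

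The key point is then to show that these rules can be assembled into only \emph{two} ENRs rather than three. In Proposition \ref{3setsun} the three sets $E_1$, $E_2$, $E_3$ were forced because, on a general tree, the $X$ and $H$ configurations needed their own rule incompatible with the $I_1$ and $I_2$ rules. With $X$ and $H$ absent, I expect that $E_3$ can be merged into one of $E_1$ or $E_2$. Concretely, I would define the first ENR to consist of the $Y_2$/$Y_1$ configurations whose doubly-occupied arm has dots of opposite colors, together with the $I_3$ diagrams, and the second ENR to consist of everything else — namely the $Y_2$/$Y_1$ configurations with same-colored dots on the doubly-occupied arm and the $I_1$, $I_2$ diagrams. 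Alternatively, one might split along the $E_1$/$E_2$ line of Proposition \ref{3setsun} and fold the old $E_3$ pieces into whichever side their limiting behavior is compatible with. Either way, the GMPRs described above are used on each piece.

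The main obstacle will be the compatibility/continuity check on the boundaries of these two sets — verifying that the chosen geodesic varies continuously as one allows dots to merge or to approach the vertex, so that a configuration in the closure of several sub-cases receives a well-defined geodesic agreeing with the limit of nearby geodesics. This is the same kind of limiting analysis performed in Proposition \ref{3setsun}: when a dot unaccompanied on its arm slides to the vertex, a $Y_2$ diagram degenerates to an $I$ diagram, and one must confirm that the limiting motion matches the rule assigned to that $I$ diagram (or else lies in the other ENR). I would run through each of the (now fewer) degeneration scenarios, using the observation — already exploited in the proof of Proposition \ref{3setsun} — that the arrow-paths of Figures \ref{fig3} and \ref{fig4} move between outside dots in the direction of increasing arm number, which matches the BBWW motion of the relevant $I$ diagrams, so that the two rules glue continuously where they must and land in different ENRs where they would conflict. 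Since $X$ and $H$ are not present, the bookkeeping is strictly shorter than in Proposition \ref{3setsun}, so I do not anticipate any genuinely new difficulty beyond carefully re-checking these boundary cases.
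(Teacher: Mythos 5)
There is a genuine gap, and it sits exactly where you wrote ``I do not anticipate any genuinely new difficulty.'' Your second ENR contains, simultaneously, all same-color $Y_2$ diagrams (the Figure \ref{fig3} and \ref{fig4} types) \emph{and} all $I_1$, $I_2$ diagrams, while keeping the sequential rules of Proposition \ref{3setsun}. That combination is discontinuous. Concretely, on the $\Y$ graph take the first diagram of Figure \ref{fig3}: black dots on arm $1$ (at distance $1$) and on arm $3$ (at distance $\delta$), white dots on arm $2$ at distances $\delta'$ (inner) and $2$ (outer). Your rule sends the arm-$1$ black through the vertex to the outer white first, while the arm-$3$ black waits. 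Now let $\delta,\delta'\to 0$: the limiting configuration is a BBWW diagram spanning arms $1$ and $2$ with the vertex in its interior, i.e.\ an $I_2$ diagram, which lies in your second ENR. But the chosen geodesics converge to a path in which one particle passes through the vertex while the other sits there --- a collision, so the limit is not even a path in $\C$, let alone the geodesic your $I_2$ rule assigns. In Proposition \ref{3setsun} this exact degeneration was harmless only because the limiting $I_2$ diagram lay in the \emph{other} of $E_1$, $E_2$; merging those two sets (or folding $E_3$ into one of them) destroys precisely that escape, and no choice of which black moves first avoids it: whichever particle is sent through the vertex first, the degeneration in which the \emph{other} black approaches the vertex produces the same conflict inside your merged set.

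This is why the paper's two-set proof (Proposition \ref{Yun}) is not a bookkeeping reduction of the three-set proof but a different construction. It abandons the arm-number-based split in favor of a rotation-invariant one ($E_1'$, $E_2'$ distinguished by chirality for the opposite-color $Y_2$ diagrams, with most $I$ diagrams in $E_1'$ and most $Y$ diagrams in $E_2'$), it uses simultaneous uniform motion on \emph{all} $I$ diagrams, and --- crucially --- on the same-color $Y_2$ diagrams (diagrams $A$ and $B$ of Figure \ref{fig5}) it replaces the sequential rule by a simultaneous motion in which one point moves uniformly and the other reaches the vertex at the tuned time $t_0=\max\bigl(2d_1/(2d_1+d_3),\,d_2/(d_2+d_4)\bigr)$, resp.\ $\min\bigl(d_1/(d_1+2d_3),\,d_2/(d_2+d_4)\bigr)$. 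That interpolating rule is what makes the limits agree both with the uniform motion on the $Y_1$ diagrams and with the simultaneous motion on the $I$ diagram of Figure \ref{fig5} lying in the same set. To repair your proposal you would need some analogue of this new rule (and a partition whose boundary cases actually match it); as written, the continuity check fails on your second ENR.
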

\begin{proof} There are no $X$ or $H$ diagrams. We can describe all the diagrams in a rotation-invariant way. The $Y_2$ diagrams and one $I$ diagram are placed into the two sets $E_1'$ and $E_2'$ as suggested in Figure \ref{fig5}. The depiction of contiguous white and black dots means that they can appear in either order or at the same point, and in the last diagram they may appear anywhere between the two end points.

\begin{figure}
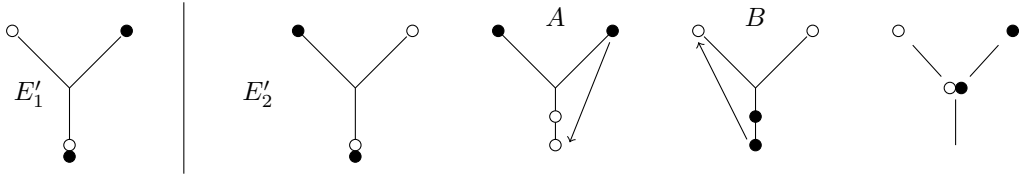

\begin{center}

\begin{\tz}[scale=.38]
\draw (0,-1.8) -- (0,0) -- (1.8,1.8);
\draw (0,0) -- (-1.8,1.8);
\filldraw (2,2) circle (.2);
\filldraw (0,-2.4) circle (.2);
\draw (-2,2) circle [radius=.2];
\draw (0,-2) circle [radius=.2];
\node at (-1.4,-.2) {$E_1'$};
\draw (4,-3) -- (4,3);
\draw (10,-1.8) -- (10,0) -- (8,2);
\draw (10,0) -- (11.8,1.8);
\filldraw (8,2) circle (.2);
\filldraw (10,-2.4) circle (.2);
\draw (12,2) circle [radius=.2];
\draw (10,-2) circle [radius=.2];
\node at (6.6,-.2) {$E_2'$};
\draw (17,-1.8) -- (17,-1.2);
\draw (17,-2) circle [radius=.2];
\draw (17,-1) circle [radius=.2];
\draw (17,-.8) -- (17,0) -- (19,2);
\draw (17,0) -- (15,2);
\filldraw (19,2) circle (.2);
\filldraw (15,2) circle (.2);
\draw [->] (18.9,1.6) -- (17.5,-1.9);
\draw (24,-2) -- (24,0) -- (25.8,1.8);
\draw (24,0) -- (22.2,1.8);
\filldraw (24,-2) circle (.2);
\filldraw (24,-1) circle (.2);
\draw (26,2) circle [radius=.2];
\draw (22,2) circle [radius=.2];
\draw [->] (23.7,-1.8) -- (22,1.6);
\node at (17,2.5) {$A$};
\node at (24,2.5) {$B$};
\draw (31,-2) -- (31,-.4);
\draw (30.8,0) circle [radius=.2];
\filldraw (31.2,0) circle (.2);
\draw (29,2) circle [radius=.2];
\filldraw (33,2) circle (.2);
\draw (30.5,.4) -- (29.5,1.5);
\draw (31.5,.4) -- (32.5,1.5);
\end{\tz}
\end{center}
\caption{ Some $Y_2$ diagrams and an $I$ diagram.}
\label{fig5}
\end{figure}

All $Y_1$ diagrams are placed in $E_2'$, and all $I$ diagrams except the ones of the type illustrated in Figure \ref{fig5} are placed in $E_1'$.
The GMPR is simultaneous uniform linear motion on all the $I$ diagrams and the first two diagrams in Figure \ref{fig5}. For the $I$ diagrams, there is only one way that this can be done without collision, while for the two in Figure \ref{fig5}, it is between the two bottom dots and between the two top dots. For diagrams $A$ and $B$ in Figure \ref{fig5} and the $Y_1$ diagrams obtained from them by moving the inner point to the vertex, call the point moving with the arrow ``point 1'' and the point moving between the other two dots ``point 2.'' We use uniform linear motion for point 1, while point 2 moves uniformly from the black dot to the vertex, and then uniformly (at a usually different rate) from the vertex to the white dot in such a way that for Diagram A (resp.~B) it arrives at the vertex after (resp.~before) the first dot.

Let $d_1$ (resp.~$d_3$) denote the distance from the vertex of the black (resp.~white) dot involved in the uniform motion with the arrow, and $d_2$ (resp.~$d_4$) the distance from the vertex of the other black (resp.~white) dot.
Noting that  point 1 hits the vertex when $t=d_1/(d_1+d_3)$, we choose to have point 2 hit the vertex when $t_0=\max(2d_1/(2d_1+d_3),d_2/(d_2+d_4))$
in Diagram A, and when $t_0=\min(d_1/(d_1+2d_3),d_2/(d_2+d_4))$ in Diagram B. This gives the appropriate values of $t_0$ if $d_4=0$ or $d_2=0$, implying uniform linear motion on the $Y_1$ diagrams. It is important to note, too, that the limiting motion as $d_1$ (resp.~$d_3$) approaches 0 in Diagram A (resp.~B) is the simultaneous uniform linear motion of the $I$ diagram in $E_2'$.

Since most of the $I$ diagrams are in $E_1'$, while most of the $Y$ diagrams are in $E_2'$, only a small amount of checking of compatibility of the motion in an $I$ diagram that is a limit of $Y$ diagrams in the same $E_i'$ is required, and this is easily done. The only $I$ diagram that is a limit of $Y$ diagrams in $E_1'$ is in $E_2'$. Each of the first three $E_2'$ diagrams in Figure \ref{fig5} can approach an $I$ diagram with an impossible motion of moving from an outer black dot to an outer white dot, but the arrangement of black and white in these is opposite to the $I$ diagram in Figure \ref{fig5}, so that limiting diagram is in $E_1'$.
\end{proof}

\begin{proof}[Proof of Theorem \ref{GCthmun}] The lower bounds on $\GC$ follow from the known values of $\TC$.  The upper bounds follow from Propositions \ref{3setsun} and \ref{Yun}.
\end{proof}




\bibliographystyle{plain}

\end{document}